\title{Palindromes and periodic continued fractions.
       \thanks{ This research was supported by
                RFBR grant 15-01-05700,
                and ``Dynasty'' Foundation}}
\author{Oleg\,N.\,German, Ibragim\,A.\,Tlyustangelov}
\date{}
\theoremstyle{definition}
\newtheorem{definition}{Definition}
\newtheorem*{notation*}{Notation}
\theoremstyle{remark}
\newtheorem{remark}{Remark}
\newtheorem*{remark*}{Remark}
\theoremstyle{plain}
\newtheorem{theorem}{Theorem}
\newtheorem{lemma}{Lemma}
\newtheorem{proposition}{Proposition}
\newtheorem{corollary}{Corollary}
\newtheorem*{statement*}{Statement}
\newtheorem*{corollary*}{Corollary}
\DeclareMathOperator{\conv}{conv}
\renewcommand{\phi}{\varphi}
\renewcommand{\vec}[1]{\mathbf{#1}}
\renewcommand{\geq}{\geqslant}
\newcommand{\R}{\mathbb{R}}
\newcommand{\Z}{\mathbb{Z}}
\newcommand{\Q}{\mathbb{Q}}
\newcommand{\cC}{\mathcal{C}}
\newcommand{\cK}{\mathcal{K}}
\newcommand{\cL}{\mathcal{L}}
\newcommand{\cP}{\mathcal{P}}
\newcommand{\Sl}{\textup{SL}}
\newcommand{\Gl}{\textup{GL}}
\newcommand{\tr}[1]{{#1}^\intercal}
\begin{document}

\maketitle

\begin{abstract}
  This paper is devoted to a detailed exposition of geometry of continued fractions. We pay particular interest to the case of quadratic irrationalities and use the technique described to prove a criterion for the continued fraction of a quadratic surd to have a symmetric period.
\end{abstract}


\section{Introduction}

Since the times of Legendre \cite{legendre} it has been well known that for each rational $r>1$ different from a perfect square we have
\begin{equation} \label{eq:sqrt_expansion}
  \sqrt r=[a_0;\overline{a_1,a_2,\ldots,a_2,a_1,2a_0}].
\end{equation}
Particularly, a period of this continued fraction read back to front is again a period. Lately we have been witnessing attempts to find a criterion for a quadratic irrationality to have this kind of period symmetry (see \cite{arnold_UMN}, \cite{aicardi}, and also \cite{burger}). However, the corresponding criterion has been known for almost a century, though apparently it has never been formulated in a straightforward way. We decided to use these circumstances to demonstrate how geometry of numbers can be applied to prove such statements. To this end we describe in detail the geometric approach to continued fractions with an emphasis on quadratic irrationalities (see also \cite{korkina_2dim}, \cite{karpenkov_book}, \cite{erdos_gruber_hammer}).

First of all, let us agree on terminology. We use the word \emph{period} to denote both a repeating finite sequence of elements of a periodic sequence and the whole family of such sequences.

\begin{definition}
  We say that a finite sequence $(a_1,a_2,\ldots,a_{t-1},a_t)$ is

  a) a \emph{regular palindrome} if $a_k=a_{t+1-k}$ for each $k\in\{1,\ldots,t\}$;

  b) a \emph{cyclic palindrome} if there is a cyclic shift $\sigma$ of indices, such that $a_k=a_{\sigma(t+1-k)}$ for each $k\in\{1,\ldots,t\}$.
\end{definition}

If a finite sequence is a cyclic palindrome, then so is its image under any cyclic shift. Thus, if at least one of the `words' representing the period of a periodic sequence is a cyclic palindrome, then so are all such `words', so it is correct to talk about \emph{cyclic palindromic periods}.

\begin{definition}
  We say that the period of a periodic sequence is \emph{cyclic palindromic} if the `words' representing the period are cyclic palindromes.
\end{definition}

Clearly, a period read back to front is again a period of the same periodic sequence if and only if this period is cyclic palindromic. So, the initial question is actually the question of finding a `nice' criterion for a continued fraction to have a cyclic palindromic period. Notice that a similar question concerning regular palindromes differs from ours. Indeed, not every cyclic palindrome can be turned into a regular one by a cyclic shift: $(1,2)$ and $(1,1,1,2)$ are examples of such sequences. Moreover, the answer to the question concerning regular palindromes follows easily from the classical Galois theorem on reduced quadratic irrationalities. It was his first paper \cite{galois} where he proved particularly that if
\[ \alpha=[\overline{a_0;a_1,\ldots,a_t}], \]
then for the conjugate $\bar\alpha$ of $\alpha$ we have
\[ -1/\bar\alpha=[\overline{a_t;a_{t-1},\ldots,a_0}]. \]
Hence a criterion follows immediately:

\begin{proposition} \label{prop:regular_palindrome_criterion}
  Let $\alpha$ be a quadratic irrationality. Then its continued fraction has a period which is a regular palindrome if and only if $\alpha\sim\omega$: $\omega\bar\omega=-1$.
\end{proposition}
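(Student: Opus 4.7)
The strategy is to apply Galois's theorem, cited in the display immediately above the proposition, to a conveniently chosen representative of the $\Gl_2(\Z)$-equivalence class of $\alpha$. The key observation is that whenever $\omega\bar\omega=-1$ and $\omega>1$, one automatically has $\bar\omega=-1/\omega\in(-1,0)$, so $\omega$ is reduced; hence its continued fraction is purely periodic and Galois's identity can be invoked directly.

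So first I would reduce to such a representative. Suppose some $\omega\sim\alpha$ satisfies $\omega\bar\omega=-1$. The $\Gl_2(\Z)$-transformations $\omega\mapsto-\omega$ and $\omega\mapsto1/\omega$ both preserve the equivalence class and also preserve the norm: $(-\omega)(-\bar\omega)=\omega\bar\omega=-1$ and $(1/\omega)(1/\bar\omega)=1/(\omega\bar\omega)=-1$. Applying the first if $\omega<0$ and then the second if $0<\omega<1$, I may assume $\omega>1$, so $\omega=[\overline{a_0;a_1,\ldots,a_t}]$ is purely periodic.

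For the ``if'' direction, Galois's theorem now gives $-1/\bar\omega=[\overline{a_t;a_{t-1},\ldots,a_0}]$, while $\omega\bar\omega=-1$ rewrites as $\omega=-1/\bar\omega$. Comparing the two purely periodic expansions of the same irrational term by term --- which is permitted by the uniqueness of the continued fraction of an irrational --- I read off $a_k=a_{t-k}$ for every $k$. Since $\alpha\sim\omega$, the expansion of $\alpha$ has the same period up to cyclic shift, so one of the words representing that period is a regular palindrome. The ``only if'' direction simply reverses the computation: if $(a_0,\ldots,a_t)$ is a regular palindromic period of $\alpha$, set $\omega:=[\overline{a_0;a_1,\ldots,a_t}]$; then $\omega\sim\alpha$ is purely periodic (hence reduced), and Galois's identity together with $a_k=a_{t-k}$ gives $-1/\bar\omega=[\overline{a_t;a_{t-1},\ldots,a_0}]=\omega$, i.e.\ $\omega\bar\omega=-1$.

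The only place that calls for genuine attention is the reduction step: one must verify that the $\Gl_2(\Z)$-moves needed to push $\omega$ into the range $(1,\infty)$ preserve the norm condition $\omega\bar\omega=-1$, which is the point that makes the argument go through with no further case analysis. Everything else is a direct translation of Galois's theorem combined with the uniqueness of the continued fraction expansion.
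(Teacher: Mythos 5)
Your proof is correct and follows exactly the route the paper itself indicates: the paper gives no separate proof of Proposition \ref{prop:regular_palindrome_criterion}, stating only that it ``follows immediately'' from Galois' theorem, and your argument is precisely that deduction carried out in detail (normalizing $\omega$ to a reduced representative via norm-preserving $\Gl_2(\Z)$-moves, then comparing the purely periodic expansions of $\omega$ and $-1/\bar\omega$). The only ingredient you use beyond the displayed identity is the other half of the classical Galois theorem --- that reduced quadratic irrationalities are purely periodic --- which the paper also relies on and later proves geometrically in Section \ref{sec:geometry_of_CF}.
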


Here and below $\bar\omega$ denotes the conjugate of $\omega$, and the equivalence $\alpha\sim\omega$ means (cf. \cite{khintchine_CF}, \cite{perron_book}) that the continued fractions of $\alpha$ and $\omega$ have same `tails'. An equivalent description is provided by Serret's theorem \cite{serret}, which states that $\alpha\sim\omega$ if and only if there are $a,b,c,d\in\Z$, such that $ac-bd=\pm1$ and
\[ \alpha=\frac{a\omega+b}{c\omega+d}\,. \]

Given a cyclic palindrome, it can be cyclicly shifted so that it becomes either a regular palindrome or a regular palindrome with an extra symbol attached to it. In case this extra symbol is an integer, it is either even or odd. The following two statements with very simple proofs involving the mentioned above Galois theorem can be found in Perron's book \cite{perron_book} (Satz 3.9 and Satz 3.30, see also \cite{venkov}).

\begin{proposition}[Legendre \cite{legendre}, Perron \cite{perron_book}] \label{prop:cyclic_palindrome_criterion_even_extra}
  Let $\alpha$ be a quadratic irrationality. Then its continued fraction has a period which is a regular palindrome with an extra even partial quotient attached if and only if $\alpha\sim\sqrt r$, $r\in\Q$.
\end{proposition}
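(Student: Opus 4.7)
}
The forward direction is immediate from Legendre's formula \eqref{eq:sqrt_expansion}: if $\alpha\sim\sqrt r$ then the continued fractions of $\alpha$ and $\sqrt r$ have the same tail, and the period $(a_1,a_2,\ldots,a_2,a_1,2a_0)$ of $\sqrt r$ is manifestly a regular palindrome $(a_1,\ldots,a_2,a_1)$ with the even extra partial quotient $2a_0$ attached. So all the content is in the converse, which I plan to prove as follows.

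Suppose the period of $\alpha$'s CF is cyclic palindromic with one even extra entry. After a suitable cyclic shift the period takes the form $(2b,c_1,\ldots,c_t)$ with $2b$ the extra even partial quotient and $(c_1,\ldots,c_t)$ a regular palindrome, i.e.\ $c_i=c_{t+1-i}$. Since the equivalence class of $\alpha$ is all that matters and equivalent elements share a tail, I may replace $\alpha$ by
\[
  \alpha':=[\overline{2b;c_1,c_2,\ldots,c_t}]\sim\alpha,
\]
which is purely periodic, hence reduced in the sense of Galois; I then drop the prime.

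Now I apply the Galois theorem recalled in the excerpt: it gives
\[
  -1/\bar\alpha=[\overline{c_t;c_{t-1},\ldots,c_1,2b}]=[\overline{c_1;c_2,\ldots,c_t,2b}],
\]
where the second equality is the palindrome identity $c_i=c_{t+1-i}$. Peeling off the leading partial quotient of $\alpha$ gives $\alpha=2b+1/\beta$ with $\beta=[\overline{c_1;c_2,\ldots,c_t,2b}]=-1/\bar\alpha$, so
\[
  \alpha = 2b+\frac{1}{-1/\bar\alpha}=2b-\bar\alpha,
\]
i.e.\ $\alpha+\bar\alpha=2b\in\Z$. Hence $\alpha-b$ has zero trace, which forces $r:=(\alpha-b)^2$ to lie in $\Q$ (and to be positive, since $\alpha$ is a real irrational), and $\alpha-b=\pm\sqrt r$.

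It remains to conclude that $\alpha\sim\sqrt r$. This is automatic: $\alpha\sim\alpha-b$ because integer translation preserves the tail, and $-\sqrt r\sim\sqrt r$ by Serret's theorem applied to the unimodular change of variable of determinant $-1$. The only genuinely non-mechanical step is the initial normalisation—arranging the purely periodic representative so that the even entry sits at the head of the period, which is what makes the Galois involution mesh with the palindrome identity. Everything afterwards is a one-line application of Galois plus the trace-zero observation.
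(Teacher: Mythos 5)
Your argument is correct, but it is worth saying how it sits relative to the paper. The paper never writes out a proof of Proposition~\ref{prop:cyclic_palindrome_criterion_even_extra}: it refers to Perron (Satz~3.9) for precisely the Galois-theorem computation you perform, and the proof it actually supplies of the corresponding fact is the geometric one in Lemma~\ref{l:even_center}, where $\omega+\bar\omega=0$ is read off from the invariance of the Klein polygon under the reflection $(x,y)\mapsto(x,-y)$ (or $(x,y)\mapsto(-x,y)$), the even partial quotient being the integer length of the edge that this reflection fixes. So you have reconstructed the classical route the paper only cites, not its own. Your key steps are the right ones: normalising the period to $[\overline{2b;c_1,\ldots,c_t}]$, combining Galois with the palindrome identity to get $-1/\bar\alpha=[\overline{c_1;c_2,\ldots,c_t,2b}]$, recognising this as the first complete quotient of $\alpha$, and concluding $\alpha+\bar\alpha=2b$; the evenness enters exactly where you need the translation by $b=(\alpha+\bar\alpha)/2$ to be integral, which is what separates this case from Proposition~\ref{prop:cyclic_palindrome_criterion_odd_extra}. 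One cosmetic point: in the forward direction you should first normalise $r>1$ (if $0<r<1$, replace $\sqrt r$ by $1/\sqrt r=\sqrt{1/r}\sim\sqrt r$) before invoking \eqref{eq:sqrt_expansion}, and of course $r$ cannot be a rational square since $\alpha$ is irrational. As for what each approach buys: yours is shorter and self-contained given Galois' theorem, while the paper's geometric mechanism treats all four symmetry types of Theorem~\ref{t:cyclic_palindrome_criterion} uniformly and, in particular, yields the equivalence of statements (b) and (c), which is not visible from the Galois-side computation alone.
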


\begin{proposition}[Kraitchik \cite{kraitchik}, Perron \cite{perron_book}] \label{prop:cyclic_palindrome_criterion_odd_extra}
  Let $\alpha$ be a quadratic irrationality. Then its continued fraction has a period which is a regular palindrome with an extra odd partial quotient attached if and only if $\alpha\sim1/2+\sqrt r$, $r\in\Q$.
\end{proposition}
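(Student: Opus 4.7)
The plan is to follow the same recipe as in Proposition~\ref{prop:regular_palindrome_criterion}: reduce to the purely periodic case via equivalence, then exploit Galois' theorem. Since the property of having a cyclic palindromic period depends only on the tail of the continued fraction, one may assume $\alpha$ is reduced, so that $\alpha=[\overline{b;a_1,\ldots,a_t}]$. A cyclic shift of its period is a regular palindrome with an extra symbol attached precisely when $(a_1,\ldots,a_t)$ is a regular palindrome, and the extra symbol is then $b$.

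The heart of the argument is to compare two purely periodic expansions,
\[
  \frac{1}{\alpha-b}=[\overline{a_1;a_2,\ldots,a_t,b}]\quad\text{and}\quad -\frac{1}{\bar\alpha}=[\overline{a_t;a_{t-1},\ldots,a_1,b}],
\]
the second being Galois' identity. By uniqueness of purely periodic CF expansions, these two numbers coincide if and only if $(a_1,\ldots,a_t)$ is a regular palindrome, and that coincidence is equivalent to $\alpha+\bar\alpha=b$. Setting $\omega=\alpha-b/2$ gives $\omega+\bar\omega=0$, so $\omega^{2}=r\in\Q_{>0}$ and $\alpha=b/2\pm\sqrt r$. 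When $b$ is odd, $(b-1)/2\in\Z$ yields $\alpha\sim 1/2\pm\sqrt r$, and the map $1/2-\sqrt r=-(1/2+\sqrt r)+1$, of determinant $-1$, together with Serret's theorem gives $1/2-\sqrt r\sim 1/2+\sqrt r$. Hence $\alpha\sim 1/2+\sqrt r$.

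For the converse, given $\alpha\sim 1/2+\sqrt r$ I would set $k=\lfloor\sqrt r+1/2\rfloor$, so that $\alpha_k=k+1/2+\sqrt r$ satisfies $\alpha_k>1$ and $-1<\bar\alpha_k<0$, i.e., is reduced. Then $\alpha_k\sim\alpha$ and $\alpha_k+\bar\alpha_k=2k+1$ is an odd integer. Write $\alpha_k=[\overline{b;a_1,\ldots,a_t}]$; the bounds $\alpha_k\in(b,b+1)$ and $\bar\alpha_k\in(-1,0)$ force the trace $\alpha_k+\bar\alpha_k$ to lie in the open interval $(b-1,b+1)$, so necessarily $b=2k+1$, which is odd. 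Then $\alpha_k-b=-\bar\alpha_k$ gives $1/(\alpha_k-b)=-1/\bar\alpha_k$, and matching the two purely periodic expansions displayed above forces $a_j=a_{t+1-j}$, so $(a_1,\ldots,a_t)$ is a regular palindrome and the period of $\alpha$ has the required shape. The main obstacle is purely bookkeeping around the sign ambiguities: the two possibilities $\omega=\pm\sqrt r$ in the forward direction, and the choice of translation $k$ producing a reduced representative in the converse, must both be absorbed via Serret's theorem so that the normalised witness $1/2+\sqrt r$ actually appears.
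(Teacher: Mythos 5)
Your forward direction is sound: for a reduced $\alpha=[\overline{b;a_1,\ldots,a_t}]$ the comparison of $1/(\alpha-b)=[\overline{a_1;a_2,\ldots,a_t,b}]$ with Galois' $-1/\bar\alpha=[\overline{a_t;a_{t-1},\ldots,a_1,b}]$ correctly gives ``$(a_1,\ldots,a_t)$ is a regular palindrome $\iff\alpha+\bar\alpha=b$'', and the passage to $1/2+\sqrt r$ via an integer shift and the unimodular map $x\mapsto 1-x$ is fine. (Two small points: you should justify that the extra symbol may be assumed to sit at the leading position $b$ --- this holds because every cyclic shift of the period is the period of some tail of $\alpha$, and all tails are reduced and equivalent to $\alpha$; this is in fact the classical argument the paper attributes to Perron rather than proves, its own machinery for this case being the geometric Lemma~\ref{l:odd_center_1/2}.)

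The converse, however, has a genuine gap. First, a computational slip: with $k=\lfloor\sqrt r+1/2\rfloor$ one gets $\bar\alpha_k=k+1/2-\sqrt r\in(0,1)$, not $(-1,0)$; the reducedness condition forces $k\in(\sqrt r-3/2,\,\sqrt r-1/2)$. More seriously, no integer $k$ can work when $0<r<1/4$: every reduced $\beta$ satisfies $\beta-\bar\beta>1$, while every integer translate of $1/2+\sqrt r$ has $\beta-\bar\beta=2\sqrt r<1$, so the equivalence class contains \emph{no} reduced element of the form $k+1/2+\sqrt r$, and your plan of absorbing ``the choice of translation $k$'' via Serret cannot be carried out. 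Concretely, $\omega=1/2+\sqrt{1/8}=(2+\sqrt2)/4$ has tails $\omega_1=4-2\sqrt2$, $\omega_2=3+2\sqrt2$ (with $\bar\omega_2\in(0,1)$, not reduced), $\omega_3=(1+\sqrt2)/2=1/2+\sqrt{1/2}=[\overline{1;4}]$: the reduced witness with odd trace exists, but it is reached by a fractional-linear map that changes $r$, not by a translation. What the converse actually requires is that the class of $1/2+\sqrt r$ always contains a reduced element whose trace is an \emph{odd integer}; this is the real content of the proposition and needs a separate argument --- e.g.\ Perron's, tracking the normal form $(P+\sqrt D)/Q$ with $Q\mid D-P^2$ along the continued fraction algorithm, or the symmetry argument of Lemma~\ref{l:odd_center_1/2}. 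As written, your converse proves the statement only for $r$ large enough that a reduced integer translate exists.
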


Now, Propositions \ref{prop:regular_palindrome_criterion}, \ref{prop:cyclic_palindrome_criterion_even_extra}, \ref{prop:cyclic_palindrome_criterion_odd_extra} together give us the desired criterion. We formulate it as follows, with a little symmetrizing addition (statement \textup{(c)}).

\begin{theorem} \label{t:cyclic_palindrome_criterion}
  The continued fraction of a quadratic irrationality $\alpha$ has a cyclic palindromic period if and only if one of the following statements holds:

  \vskip 0.3mm
  \hskip -2.5mm
  \begingroup
  \addtolength{\jot}{-0.25em}
  $ \begin{aligned}
      \textup{(a)}\hskip 2.50mm & \alpha\sim\omega:\ \omega+\bar\omega=0\ (\text{i.e. }\omega^2\in\Q); \\
      \textup{(b)}\hskip 2.37mm & \alpha\sim\omega:\ \omega+\bar\omega=1\ (\text{i.e. }(\omega-1/2)^2\in\Q); \\
      \textup{(c)}\hskip 2.58mm & \alpha\sim\omega:\ \omega\bar\omega=1; \\
      \textup{(d)}\hskip 2.37mm & \alpha\sim\omega:\ \omega\bar\omega=-1.
    \end{aligned} $
  \endgroup

  \noindent
  Besides that, $(b)$ is equivalent to $(c)$.
\end{theorem}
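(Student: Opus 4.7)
The plan is to reduce Theorem \ref{t:cyclic_palindrome_criterion} to the three preceding propositions by means of a combinatorial normal-form analysis of cyclic palindromes, and then to verify $(b) \Leftrightarrow (c)$ separately by exhibiting an explicit Serret transformation.

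First I would establish a structural lemma: every cyclic palindrome $(a_1, \ldots, a_t)$ can be cyclicly shifted so that it becomes either (i) a regular palindrome, or (ii) a regular palindrome of length $t-1$ with one extra symbol attached at one end. A convenient way to see this is to let $\sigma$ denote the shift by one position and $R$ the reversal of a length-$t$ word, note that the cyclic-palindrome condition reads $a = R\sigma^j a$ for some $j$, and compute that $b = \sigma^m a$ then satisfies $R b = \sigma^{\,j-2m} b$. Choosing $m$ so that $j - 2m \equiv 0 \pmod t$ yields form (i); when $t$ is even and $j$ odd this particular congruence has no solution, but $j - 2m \equiv \pm 1 \pmod t$ does, and its solution produces form (ii).

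With this normal form in hand, the theorem (aside from the $(b)\Leftrightarrow(c)$ addition) follows by direct translation. If $\alpha$ has a cyclic palindromic period, a cyclic shift of a word representing the period puts it into one of the two normal forms — this does not change the periodic sequence and hence does not change the tail of $\alpha$. Case (i) is covered by Proposition \ref{prop:regular_palindrome_criterion} and gives $(d)$, while case (ii) is covered by Proposition \ref{prop:cyclic_palindrome_criterion_even_extra} or Proposition \ref{prop:cyclic_palindrome_criterion_odd_extra} according to the parity of the extra partial quotient and gives $(a)$ or $(b)$. The converse directions are immediate, since each of the palindromic forms appearing in those propositions is \emph{a fortiori} a cyclic palindrome.

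It remains to show $(b) \Leftrightarrow (c)$. Here I would take the Möbius transformation $T(\omega) = \omega/(\omega - 1)$, which corresponds to the unimodular matrix $\left(\begin{smallmatrix} 1 & 0 \\ 1 & -1 \end{smallmatrix}\right)$, so that $T(\omega)\sim\omega$ by Serret's theorem. A direct computation,
\[
  T(\omega)\,\overline{T(\omega)} \;=\; \frac{\omega\bar\omega}{(\omega-1)(\bar\omega-1)} \;=\; \frac{\omega\bar\omega}{\omega\bar\omega - (\omega+\bar\omega) + 1},
\]
shows that $\omega + \bar\omega = 1$ forces $T(\omega)\overline{T(\omega)} = 1$; since $T$ is an involution, the reverse implication is symmetric. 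The main obstacle in the whole argument is the normal-form step for cyclic palindromes — once that is in place, everything else is essentially bookkeeping on top of the three quoted propositions.
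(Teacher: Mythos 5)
Your argument is correct, but it follows a genuinely different route from the paper's. The paper does not actually derive Theorem \ref{t:cyclic_palindrome_criterion} from Propositions \ref{prop:regular_palindrome_criterion}--\ref{prop:cyclic_palindrome_criterion_odd_extra}: in Section \ref{sec:proof} it gives a self-contained geometric proof, splitting the statement into four lemmas according to whether the bi-infinite sequence $(a_k)$ of integer lengths and angles written along the Klein polygons has an even, odd, or intermediate center of symmetry, and realizing each of the four algebraic conditions as an explicit $\Gl_2(\Z)$-symmetry interchanging $\cL_\omega$ and $\cL_{\bar\omega}$. In particular, the paper's proof of $(b)\Leftrightarrow(c)$ is indirect --- Lemmas \ref{l:odd_center_1/2} and \ref{l:odd_center_norm_1} show that both conditions are equivalent to the same combinatorial property, namely that $(a_k)$ has an odd center --- whereas you verify the equivalence by a direct computation with the involution $\omega\mapsto\omega/(\omega-1)$, which is, not coincidentally, the M\"obius action of the very matrix $\bigl(\begin{smallmatrix}1&0\\1&-1\end{smallmatrix}\bigr)$ the paper uses as the geometric symmetry in Lemma \ref{l:odd_center_1/2}. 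Your route is the one the paper itself sketches in the Introduction (the authors note that everything except $(b)\Leftrightarrow(c)$ follows from the three quoted propositions), and your normal-form lemma for cyclic palindromes --- reducing $Ra=\sigma^j a$ to $Rb=\sigma^{j-2m}b$ and solving $j-2m\equiv 0$ or $\pm1\pmod t$ --- correctly supplies the combinatorial step the paper leaves implicit; one small presentational point is that the converse for case $(c)$ is not literally covered by a quoted proposition and must be routed through $(b)$ via your equivalence. What you lose relative to the paper is self-containedness (Propositions \ref{prop:cyclic_palindrome_criterion_even_extra} and \ref{prop:cyclic_palindrome_criterion_odd_extra} are imported as black boxes from Legendre, Kraitchik and Perron) and the unified geometric picture in which all four cases are instances of one symmetry phenomenon; what you gain is brevity and an explicit, checkable Serret transformation witnessing $(b)\Leftrightarrow(c)$.
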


In Section \ref{sec:proof}, after having described in Section \ref{sec:geometry_of_CF} geometry of continued fractions of quadratic irrationalities, we shall prove Theorem \ref{t:cyclic_palindrome_criterion} geometrically. We notice that the only thing in Theorem \ref{t:cyclic_palindrome_criterion} which does not follow directly from Propositions \ref{prop:regular_palindrome_criterion}, \ref{prop:cyclic_palindrome_criterion_even_extra}, \ref{prop:cyclic_palindrome_criterion_odd_extra} is the equivalence of statements \textup{(b)} and \textup{(c)}.

Before finishing this Introduction we would also like to notice that it follows from Propositions \ref{prop:cyclic_palindrome_criterion_even_extra} and \ref{prop:cyclic_palindrome_criterion_odd_extra} (or statements \textup{(a)} and \textup{(b)} of Theorem \ref{t:cyclic_palindrome_criterion}) that all quadratic integers have cyclic palindromic periods. One may ask if the periods of quadratic units can be described more explicitly. Such a description is provided by the following simple statement.

\begin{proposition} \label{prop:unit_period_description}
  For each positive integer $q$ we have

  \vskip 2mm
  \textup{(a)}\,\ $\alpha^2-q\alpha-1=0\iff\alpha=q+\dfrac1\alpha=[\overline q]$;

  \textup{(b)}\,\ $\alpha^2-(q+2)\alpha+1=0\iff\alpha-1=q+\cfrac{1}{1+\cfrac1{\alpha-1}}=[\overline{q;1}]$.
\end{proposition}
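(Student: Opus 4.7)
The plan is to verify each equivalence by a direct algebraic reduction of the fixed-point equation encoded by the purely periodic continued fraction; the only ingredient beyond elementary algebra is the fact that a purely periodic simple continued fraction with positive integer partial quotients defines a unique real number greater than $1$.

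For part (a), the identity $\alpha = q + 1/\alpha$ is, by definition, the statement that $\alpha = [\overline{q}]$, since the full tail of the expansion coincides with $\alpha$ itself. Multiplying through by $\alpha$ gives $\alpha^2 - q\alpha - 1 = 0$. Conversely, this quadratic has exactly one root $\alpha_0 > 1$, and $\alpha_0 = q + 1/\alpha_0$; iterating the substitution exhibits $\alpha_0$ as the purely periodic continued fraction $[\overline{q}]$.

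For part (b), I would set $\beta = \alpha - 1$, so that the asserted expansion $\beta = [\overline{q;1}]$ becomes the fixed-point equation
\[
  \beta \;=\; q + \cfrac{1}{1 + \cfrac{1}{\beta}} \;=\; q + \frac{\beta}{\beta+1}.
\]
Multiplying by $\beta+1$ yields $\beta^2 - q\beta - q = 0$, and substituting $\beta = \alpha - 1$ back produces the claimed $\alpha^2 - (q+2)\alpha + 1 = 0$. The reverse implication runs the same algebra backwards, after noting that the larger root satisfies $\alpha > q + 1 \geq 2$, so that $\beta > 1$ and the expansion $[\overline{q;1}]$ has legitimate positive partial quotients.

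I expect no serious obstacle here: the whole argument is a one-line manipulation in each case, and the only bookkeeping concerns the choice of the correct (larger) root so that the periodic expansion is meaningful.
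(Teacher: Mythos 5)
Your argument is correct and is precisely the verification the paper intends: the statement of the proposition already displays the fixed-point identities $\alpha=q+\frac1\alpha$ and $\alpha-1=q+\cfrac{1}{1+\cfrac{1}{\alpha-1}}$, and the paper supplies no further proof, treating the algebra as self-evident. The only substantive point beyond that one-line manipulation is the selection of the larger root so that the purely periodic expansion is legitimate, and you handle it correctly (including the observation that $\alpha>q+1$ forces $\alpha-1>1$ in part (b)).
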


Despite its simplicity we could not find Proposition \ref{prop:unit_period_description} in the literature, so we would be glad to find out who was the first to discover it.

\section{Geometry of continued fractions} \label{sec:geometry_of_CF}

Given a real $\alpha$ we shall denote by $\cL_\alpha$ the line in $\R^2$ which passes through the points $(0,0)$ and $(1,\alpha)$. Clearly, there are no nonzero integer points on $\cL_\alpha$ if and only if $\alpha$ is irrational.

Throughout this paper we shall consider only irrational $\alpha$ in order to avoid the necessity to take into account what happens when $\cL_\alpha$ meets an integer point.

\subsection{Klein polygons of adjacent angles}

First, let us consider the following general construction. Let $\alpha$ and $\beta$ be distinct (irrational) real numbers. Then $\cL_\alpha$ and $\cL_\beta$ split the plane into four angles. For each of those angles let us consider the convex hull of nonzero integer points contained in it. The four unbounded (generalised) polygons thus obtained are called \emph{Klein polygons}\footnote{Sometimes the term Klein polygon is applied to the \emph{boundary} of the convex hull, which is a broken line with vertices in $\Z^2$. We prefer to call this boundary a \emph{sail}, following Skubenko, Arnold et al.}. We say that two Klein polygons are \emph{adjacent} if they correspond to adjacent angles. The vertices of Klein polygons all belong to $\Z^2$, therefore, we can talk about \emph{integer lengths} of their edges and \emph{integer angles} between them.

\begin{definition}
  A line segment is said to be \emph{integer} if its endpoints are in $\Z^2$.
  An integer segment is called \emph{empty} if it contains no integer points other than its endpoints.
  The \emph{integer length} of an integer segment is defined as the number of empty segments contained in it.
\end{definition}

\begin{definition} \label{def:integer_angle}
  Given two empty integer segments with a common endpoint, the area of the parallelogram spanned by them is called the \emph{integer angle} between those segments. Given two arbitrary integer segments with a common endpoint, the \emph{integer angle} between them is defined as the integer angle between their empty subsegments incident to their common endpoint.
\end{definition}

Generally, the set of integer points contained in the parallelogram mentioned in Definition \ref{def:integer_angle} may have a rather chaotic structure. It appears that in the case of a Klein polygon's adjacent edges all those points lie on a fixed diagonal of the parallelogram.

\begin{proposition} \label{prop:vertex_sprout}
  Let $\vec v$ be a vertex of a Klein polygon $\cK$ and let $\vec u$ and $\vec w$ be the closest to $\vec v$ integer points on the edges of $\cK$ incident to $\vec v$ (see Fig. \ref{fig:vertex_sprout}). Denote by $\cP_{\vec v}$ the parallelogram determined by $\vec u$, $\vec v$, $\vec w$. Then all the integer points in $\cP_{\vec v}$ different from $\vec u$ and $\vec w$ are positive integer multiples of $\vec v$. Particularly, they all lie on the diagonal of $\cP_{\vec v}$ incident to $\vec v$.
\end{proposition}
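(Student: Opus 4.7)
The plan is to reduce the statement to the assertion that both $\{\vec v,\vec u\}$ and $\{\vec v,\vec w\}$ are $\Z$-bases of $\Z^2$; granting this, the remainder of the proposition is linear algebra in a well-chosen basis.

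First I check that $\vec v$ itself is a primitive lattice vector. If instead $\vec v=k\vec v'$ with $\vec v'\in\Z^2$ and $k\ge 2$, then $\vec v'$ lies in the same angle as $\vec v$ (that angle is a cone through $\vec 0$), so both $\vec v'$ and $(k+1)\vec v'$ belong to $\cK$, and $\vec v$ is an interior point of the segment joining them, contradicting the fact that $\vec v$ is a vertex of $\cK$. The key step is then to show that the triangle $\triangle\vec 0\vec v\vec u$ contains no lattice points besides its three vertices, which is equivalent to $\{\vec v,\vec u\}$ being a $\Z$-basis of $\Z^2$. A hypothetical extra lattice point $\vec p$ cannot lie in the open segment $(\vec 0,\vec v)$ (by primitivity of $\vec v$) nor in $(\vec v,\vec u)$ (since $\vec u$ is the closest lattice point to $\vec v$ on the corresponding edge), so it must lie strictly on the origin-side of the line $\ell$ through $\vec v$ and $\vec u$. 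But $\ell$ contains an edge of $\cK$, so by convexity the whole of $\cK$ lies in the closed half-plane bounded by $\ell$ not containing $\vec 0$; since $\vec p$ is a nonzero lattice point in the angle, $\vec p\in\cK$, giving a contradiction. The symmetric argument shows that $\{\vec v,\vec w\}$ is also a basis.

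With both bases in hand, write $\vec w=\alpha\vec v+\beta\vec u$. Then $|\det(\vec v,\vec w)|=|\beta|\cdot|\det(\vec v,\vec u)|=1$ forces $\beta=\pm 1$, and since $\vec u$ and $\vec w$ lie on opposite sides of the line through $\vec 0$ and $\vec v$, the determinants $\det(\vec v,\vec u)$ and $\det(\vec v,\vec w)$ have opposite signs, yielding $\beta=-1$. Thus $\vec u+\vec w=\alpha\vec v$ for some positive integer $\alpha$, and a short bilinear expansion of $\det(\vec u-\vec v,\vec w-\vec v)$ in the basis $\{\vec v,\vec u\}$ gives $\alpha=m+2$, where $m$ is the integer angle at $\vec v$. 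To conclude, I parametrise an arbitrary lattice point $\vec p=\vec v+s(\vec u-\vec v)+t(\vec w-\vec v)\in\cP_{\vec v}$ (with $s,t\in[0,1]$) and apply the linear functional $f=\det(\vec v,\cdot)$: one computes $f(\vec p)=s-t\in[-1,1]$, so integrality forces $f(\vec p)\in\{-1,0,1\}$. The extreme values single out $\vec u$ and $\vec w$ respectively, while $f(\vec p)=0$ together with primitivity of $\vec v$ gives $\vec p=k\vec v$ for a positive integer $k\in\{1,\ldots,m+1\}$, placing $\vec p$ on the diagonal joining $\vec v$ to $\vec u+\vec w-\vec v=(m+1)\vec v$.

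The genuine obstacle is the basis step: it is the only place where the vertex property of $\vec v$ is actually used, and it enters through the observation that the line through any edge of $\cK$ is a supporting line of the whole polygon. Everything downstream is linear algebra in the $\{\vec v,\vec u\}$-basis.
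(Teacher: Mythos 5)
Your proof is correct and follows essentially the same route as the paper's: both reduce the statement to the fact that $\{\vec v,\vec u\}$ and $\{\vec v,\vec w\}$ are bases of $\Z^2$ (equivalently, that the triangles $\vec 0\vec v\vec u$ and $\vec 0\vec v\vec w$ are empty) and then classify the lattice points of $\cP_{\vec v}$ by their signed distance to the line $\R\vec v$, which is exactly your functional $\det(\vec v,\cdot)$. The only difference is one of detail: the paper asserts the emptiness of the two triangles, whereas you prove it via the supporting-line argument.
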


\begin{proof}
  Since $\vec u$, $\vec v$, $\vec w$ lie on edges of $\cK$, the triangles $\vec 0\vec v\vec u$ and $\vec 0\vec v\vec w$ are empty (i.e. they contain no integer points other than the vertices). This means that both pairs $\{\vec v,\vec u\}$ and $\{\vec v,\vec w\}$ are bases of $\Z^2$. Hence $\vec u$ and $\vec w$ lie at the same distance from the line generated by $\vec v$, their sum $\vec u+\vec w$ belongs to it, and all the integer points closer to that line than $\vec u$ and $\vec w$ are integer multiples of $\vec v$.
\end{proof}

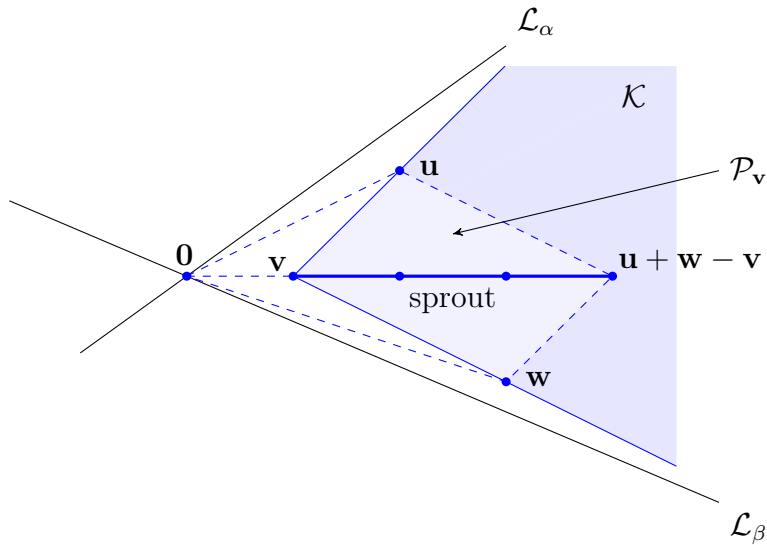
\begin{figure}[h]
  \centering
  \begin{tikzpicture}[scale=1.4]
    \draw[color=black] plot[domain=-1:3] (\x, {8*\x/11}) node[above right]{$\cL_\alpha$};
    \draw[color=black] plot[domain=-5/3:5] (\x, {-3*\x/7}) node[below right]{$\cL_\beta$};

    \fill[blue!10!,path fading=east]
        (4.6,1.99) -- (1,0) -- (4.6,-1.8) -- cycle;
    \fill[blue!10!,path fading=north]
        (2.99,1.99) -- (1,0) -- (4.6,1.99) -- cycle;
    \fill[blue!5!]
        (1,0) -- (3,-1) -- (4,0) -- (2,1) -- cycle;

    \draw[color=blue] (2.99,1.99) -- (1,0) -- (4.6,-1.8);
    \draw[very thick,color=blue] (1,0) -- (4,0);

    \draw[dashed,color=blue] (0,0) -- (1,0);
    \draw[dashed,color=blue] (0,0) -- (2,1) -- (4,0) -- (3,-1) -- cycle;

    \node[fill=blue,circle,inner sep=1.2pt] at (0,0) {};
    \node[fill=blue,circle,inner sep=1.2pt] at (1,0) {};
    \node[fill=blue,circle,inner sep=1.2pt] at (2,0) {};
    \node[fill=blue,circle,inner sep=1.2pt] at (3,0) {};
    \node[fill=blue,circle,inner sep=1.2pt] at (4,0) {};

    \node[fill=blue,circle,inner sep=1.2pt] at (2,1) {};

    \node[fill=blue,circle,inner sep=1.2pt] at (3,-1) {};

    \draw (-0.02,0) node[above]{$\vec 0$};
    \draw (1.05,-0.05) node[above left]{$\vec v$};
    \draw (2.08,1.03) node[right]{$\vec u$};
    \draw (3.08,-1+0.03) node[right]{$\vec w$};
    \draw (4-0.05,-0.05) node[above right]{$\vec u+\vec w-\vec v$};

    \draw (4.2,1.5) node[above]{$\cK$};
    \draw[->,>=stealth',color=black,thin] (5,1) node[right]{$\cP_{\vec v}$} -- (2.5,0.4);
    \draw (2.5,0) node[below]{sprout};
  \end{tikzpicture}
  \caption{Vertex sprout} \label{fig:vertex_sprout}
\end{figure}

\begin{definition} \label{def:vertex_sprout}
  Let $\cK$, $\vec v$ and $\cP_{\vec v}$ be as in Proposition \ref{prop:vertex_sprout}. We call the diagonal of $\cP_{\vec v}$ incident to $\vec v$ a \emph{vertex sprout}.
\end{definition}

Thus, the integer angle between two adjacent edges of a Klein polygon equals the integer length of the corresponding vertex sprout. There is a nice correspondence between the edges of a given Klein polygon and the vertex sprouts of the adjacent one (see also \cite{korkina_2dim}).

\begin{proposition} \label{prop:edge_vs_sprout}
  Let $\cK_1$ and $\cK_2$ be two adjacent Klein polygons. Let $\vec E_1$ denote the set of all the edges of $\cK_1$ and let $\vec S_1$ denote the set of all the vertex sprouts of $\cK_1$. Let $\vec E_2$ and $\vec S_2$ denote the same for $\cK_2$. Then there is a bijection $\phi:\vec E_1\cup\vec S_1\to\vec E_2\cup\vec S_2$ such that

  \textup{(a)} $\phi(\vec E_1)=\vec S_2$, $\phi(\vec S_1)=\vec E_2$;

  \textup{(b)} $\phi$ preserves integer lengths;

  \textup{(c)} each element of $\vec E_1\cup\vec S_1$ is parallel to its image under $\phi$;

  \textup{(d)} an edge and a sprout have a common endpoint whenever so do their images under $\phi$.
\end{proposition}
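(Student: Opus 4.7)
The plan is to construct $\phi$ explicitly via parallel directions, then verify the four conditions. Assume w.l.o.g.\ that $\cK_1$ and $\cK_2$ share the bounding line $\cL_\alpha$. A preliminary observation I would record first: every vertex of a Klein polygon is a primitive lattice point, since if $\vec v = k\vec v_0$ with $k \geq 2$ and $\vec v_0$ primitive, then $\vec v_0$ and $(2k-1)\vec v_0$ both lie in the angle with $\vec v$ as their midpoint, contradicting extremality. Consequently, each vertex, and each primitive edge direction, is determined by its direction up to sign.

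At each vertex $\vec v$ of $\cK_1$ with nearest integer points $\vec u_+, \vec u_-$ on its two incident edges, Proposition \ref{prop:vertex_sprout} gives $\vec u_+ + \vec u_- = (n+1)\vec v$ for some integer $n \geq 2$, so the sprout at $\vec v$ runs from $\vec v$ to $n\vec v$ and has integer length $n-1$. A short sign analysis relative to $\cL_\alpha$ and $\cL_\beta$ shows that $\vec u_+ - \vec v$ and $\vec u_- - \vec v$ land in opposite angles of the pair $\{\cK_2, -\cK_2\}$, so exactly one of $\pm(\vec u_+ - \vec v)$ lies in $\cK_2$'s angle --- call it $\vec v'_+$ --- and analogously $\vec v'_-$. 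Choosing signs so that $\vec v'_+ = \vec u_+ - \vec v$ and $\vec v'_- = \vec v - \vec u_-$, one computes $\vec v'_+ - \vec v'_- = \vec u_+ + \vec u_- - 2\vec v = (n-1)\vec v$, so the segment $[\vec v'_-, \vec v'_+]$ is parallel to $\vec v$ and of integer length $n - 1$. The key claim I would then prove is that $\vec v'_+$ and $\vec v'_-$ are consecutive vertices of $\cK_2$; granting it, $[\vec v'_-, \vec v'_+]$ is an edge of $\cK_2$, and setting $\phi(s) = [\vec v'_-, \vec v'_+]$ for $s$ the sprout at $\vec v$ produces the $\vec S_1 \to \vec E_2$ half of the bijection. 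The $\vec E_1 \to \vec S_2$ half arises from the symmetric construction applied to vertices of $\cK_2$.

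This $\phi$ then satisfies (a) and (c) directly by construction, (b) from the length identity $n - 1 = n - 1$, and (d) by endpoint-tracking: the two vertices of $\cK_2$ paired with a given vertex $\vec v$ of $\cK_1$ are exactly the endpoints of $\phi$ applied to the sprout at $\vec v$, so incidence on the $\cK_1$-side transfers to incidence on the $\cK_2$-side. The principal obstacle is proving that $\vec v'_+$ and $\vec v'_-$ are consecutive vertices of $\cK_2$ --- that no other vertex of $\cK_2$ lies between them along the sail. I would argue this by contradiction: a hypothetical intermediate vertex $\vec v''$ of $\cK_2$, run through the symmetric construction inside $\cK_2$, would force a lattice point to be a vertex of $\cK_1$ strictly between $\vec v$ and one of its two $\cK_1$-neighbours, violating the hypothesis that $\vec u_+, \vec u_-$ are the nearest integer points on the incident edges of $\vec v$.
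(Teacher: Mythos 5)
Your construction coincides with the paper's: at a vertex $\vec v$ of $\cK_1$ with nearest edge points $\vec u_\pm$, the segment from $\vec v-\vec u_-$ to $\vec u_+-\vec v$ is the candidate edge of $\cK_2$, parallel to the sprout at $\vec v$ and of the same integer length, and the endpoint bookkeeping then yields (a)--(d). You also correctly isolate the crux, namely that $[\vec v'_-,\vec v'_+]$ really is an edge of $\cK_2$. The genuine gap is your proposed proof of that crux. Arguing by contradiction through ``the symmetric construction applied to an intermediate vertex of $\cK_2$'' is circular: that the construction run at a vertex of $\cK_2$ produces an edge of $\cK_1$ is precisely the claim under dispute with the roles of $\cK_1$ and $\cK_2$ exchanged, so it cannot be invoked to rule out the bad configuration. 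Moreover the contradiction you aim at does not cohere: a lattice point strictly between $\vec v$ and $\vec u_+$ is impossible for the much more elementary reason that $[\vec v,\vec u_+]$ is empty, and speaking of a vertex of $\cK_2$ ``between $\vec v'_-$ and $\vec v'_+$ along the sail'' already presupposes that $\vec v'_\pm$ lie on the sail, which is part of what must be proved.

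The paper closes this gap with a short direct argument that you should substitute. Since the triangle $\vec 0\vec v\vec u_+$ is empty, $\{\vec v,\vec u_+\}$ is a basis of $\Z^2$, so the line $\ell$ through $\vec v'_+=\vec u_+-\vec v$ parallel to $\vec v$ lies at lattice distance $1$ from the line $\R\vec v$ and there are no integer points strictly between the two lines. The angle $\cC_2$ meets $\R\vec v$ only at the origin, because the two rays of $\R\vec v$ lie in the interiors of $\cC_1$ and $-\cC_1$; hence every nonzero integer point of $\cC_2$ lies on the closed far side of $\ell$, so $\ell$ is a supporting line of $\cK_2$ and the corresponding face of $\cK_2$ is the convex hull of $\Z^2\cap\ell\cap\cC_2$. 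Finally, the integer points on $\ell$ are $\vec v'_++k\vec v$, $k\in\Z$, and those belonging to $\cC_2$ are exactly $k=0,-1,\ldots,-(n-1)$, because $\vec v'_++\vec v=\vec u_+$ lies in the interior of $\cC_1$ and $\vec v'_--\vec v=-\vec u_-$ lies in the interior of $-\cC_1$, neither of which meets $\cC_2$. Thus the face is precisely $[\vec v'_-,\vec v'_+]$; with this in place the rest of your argument goes through as written.
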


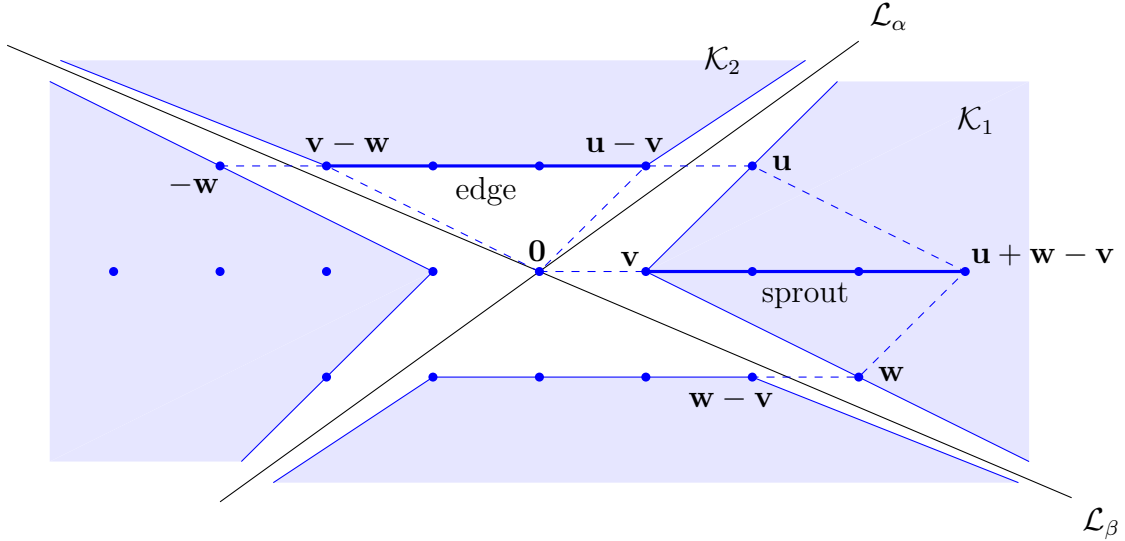
\begin{figure}[h]
  \centering
  \begin{tikzpicture}[scale=1.4]
    \draw[color=black] plot[domain=-3:3] (\x, {8*\x/11}) node[above right]{$\cL_\alpha$};
    \draw[color=black] plot[domain=-5:5] (\x, {-3*\x/7}) node[below right]{$\cL_\beta$};

    \fill[blue!10!,path fading=east]
        (4.6,1.8) -- (1,0) -- (4.6,-1.8) -- cycle;
    \fill[blue!10!,path fading=north]
        (2.8,1.8) -- (1,0) -- (4.6,1.8) -- cycle;
    \fill[blue!10!,path fading=west]
        (-4.6,-1.8) -- (-1,0) -- (-4.6,1.8) -- cycle;
    \fill[blue!10!,path fading=south]
        (-2.8,-1.8) -- (-1,0) -- (-4.6,-1.8) -- cycle;
    \fill[blue!10!,path fading=north]
        (-4.5,2) -- (-2,1) -- (1,1) -- (2.5,2) -- cycle;
    \fill[blue!10!,path fading=south]
        (4.5,-2) -- (2,-1) -- (-1,-1) -- (-2.5,-2) -- cycle;

    \draw[color=blue] (2.8,1.8) -- (1,0) -- (4.6,-1.8);
    \draw[color=blue] (-2.8,-1.8) -- (-1,0) -- (-4.6,1.8);
    \draw[color=blue] (-4.5,2) -- (-2,1) -- (1,1) -- (2.5,2);
    \draw[color=blue] (4.5,-2) -- (2,-1) -- (-1,-1) -- (-2.5,-2);

    \draw[very thick,color=blue] (-2,1) -- (1,1);
    \draw[very thick,color=blue] (1,0) -- (4,0);

    \draw[dashed,color=blue] (-3,1) -- (-2,1) -- (0,0) -- (1,1) -- (2,1) -- (4,0) -- (3,-1) -- (2,-1);
    \draw[dashed,color=blue] (0,0) -- (1,0);

    \node[fill=blue,circle,inner sep=1.2pt] at (-4,0) {};
    \node[fill=blue,circle,inner sep=1.2pt] at (-3,0) {};
    \node[fill=blue,circle,inner sep=1.2pt] at (-2,0) {};
    \node[fill=blue,circle,inner sep=1.2pt] at (-1,0) {};
    \node[fill=blue,circle,inner sep=1.2pt] at (0,0) {};
    \node[fill=blue,circle,inner sep=1.2pt] at (1,0) {};
    \node[fill=blue,circle,inner sep=1.2pt] at (2,0) {};
    \node[fill=blue,circle,inner sep=1.2pt] at (3,0) {};
    \node[fill=blue,circle,inner sep=1.2pt] at (4,0) {};

    \node[fill=blue,circle,inner sep=1.2pt] at (-3,1) {};
    \node[fill=blue,circle,inner sep=1.2pt] at (-2,1) {};
    \node[fill=blue,circle,inner sep=1.2pt] at (-1,1) {};
    \node[fill=blue,circle,inner sep=1.2pt] at (0,1) {};
    \node[fill=blue,circle,inner sep=1.2pt] at (1,1) {};
    \node[fill=blue,circle,inner sep=1.2pt] at (2,1) {};

    \node[fill=blue,circle,inner sep=1.2pt] at (-2,-1) {};
    \node[fill=blue,circle,inner sep=1.2pt] at (-1,-1) {};
    \node[fill=blue,circle,inner sep=1.2pt] at (0,-1) {};
    \node[fill=blue,circle,inner sep=1.2pt] at (1,-1) {};
    \node[fill=blue,circle,inner sep=1.2pt] at (2,-1) {};
    \node[fill=blue,circle,inner sep=1.2pt] at (3,-1) {};

    \draw (-0.02,0) node[above]{$\vec 0$};
    \draw (1.05,-0.05) node[above left]{$\vec v$};
    \draw (2.08,1.03) node[right]{$\vec u$};
    \draw (3.08,-1+0.03) node[right]{$\vec w$};
    \draw (4-0.05,-0.05) node[above right]{$\vec u+\vec w-\vec v$};
    \draw (-3+0.1,1.02) node[below left]{$-\vec w$};
    \draw (-2+0.2,1) node[above]{$\vec v-\vec w$};
    \draw (1-0.2,1) node[above]{$\vec u-\vec v$};
    \draw (2-0.2,-1) node[below]{$\vec w-\vec v$};

    \draw (4.1,1.2) node[above]{$\cK_1$};
    \draw (2,2) node[left]{$\cK_2$};

    \draw (-0.5,1) node[below]{edge};
    \draw (2.5,0) node[below]{sprout};
  \end{tikzpicture}
  \caption{Edge--sprout correspondence} \label{fig:edge_vs_sprout}
\end{figure}

\begin{proof}
  Let us denote by $\cC_1$ the angle to which $\cK_1$ corresponds, and by $\cC_2$ --- the one to which $\cK_2$ corresponds. Let $\vec v$ be a vertex of $\cK_1$ and let $\vec u$ and $\vec w$ be as in Proposition \ref{prop:vertex_sprout}. Then, the points $\vec u-\vec v$ and $\vec w-\vec v$ do not belong to $\cC_1$, but one of them belongs to $\cC_2$ and the other one to $-\cC_2$. We may assume that $\vec u-\vec v\in\cC_2$. Then, the segment $[\vec v-\vec w,\vec u-\vec v]$ is an edge of $\cK_2$, since $-\vec w$ and $\vec u$ are not in $\cC_2$, $\vec v$ is primitive, and there are no integer points between the line containing this segment and the line generated by $\vec v$ (see Fig. \ref{fig:edge_vs_sprout}). Obviously, the image of this edge under parallel translation by $\vec w$ is exactly the sprout $[\vec v,\vec u+\vec w-\vec v]$.

  Thus, given a vertex $\vec v$ of $\cK_1$, there is exactly one edge of $\cK_2$ parallel to $\vec v$ and having integer length equal to that of $\vec v$'s sprout. Consider the endpoints of this edge. As we have shown, those are $\vec v-\vec w$ and $\vec u-\vec v$. For each of them there is exactly one edge of $\cK_1$ parallel to it and incident to $\vec v$. For $\vec v-\vec w$ it is the edge starting with the segment $[\vec v,\vec w]$, and for $\vec u-\vec v$ --- the one starting with $[\vec v,\vec u]$.

  Continuing this argument in both directions, we get the desired correspondence.
\end{proof}

Notice that each vertex sprout has a `root' at the vertex and a `top' at its other endpoint, and the `root' is always closer to $\cL_\beta$ and to $\cL_\alpha$ than the `top'. This generates an orientation of all the edges, so that the `beginning' of an edge is closer to $\cL_\beta$ and farther from $\cL_\alpha$ than its `end' (if $\cK_1$ and $\cK_2$ are in same half-plane w.r.t. $\cL_\beta$ and are separated by $\cL_\alpha$, as in Fig. \ref{fig:edge_vs_sprout}). Which allows enumerating all the vertices the following way.

\begin{corollary} \label{cor:naming_vertices}
  Let $\cK_1$ and $\cK_2$ be two adjacent Klein polygons separated by $\cL_\alpha$. We can denote all the vertices of $\cK_1$ and $\cK_2$ as $\vec v_k$, $k\in\Z$, so that for each integer $k$

  \textup{(a)} $\vec v_{k-1}$, $\vec v_k$ form a basis of $\Z^2$;

  \textup{(b)} $[\vec v_{k-2},\vec v_k]$ is an edge of $\cK_1$ if $k$ is even, and of $\cK_2$ if $k$ is odd;

  \textup{\hskip0.23mm(c)\hskip0.23mm} 
               $\vec v_k$ is closer to $\cL_\alpha$ than $\vec v_{k-2}$;

  \textup{(d)} $\vec v_k=\vec v_{k-2}+a_k\vec v_{k-1}$, \\
  where $a_k$ equals both the integer length of $[\vec v_{k-2},\vec v_k]$ and the integer angle between the edges incident to $\vec v_{k-1}$.

  This numeration of vertices is unique up to the choice of the initial vertex.
\end{corollary}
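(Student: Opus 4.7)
The plan is to bootstrap from Proposition \ref{prop:edge_vs_sprout} and define the vertices inductively. Fix an arbitrary vertex of $\cK_1$ to serve as $\vec v_0$, and let $\vec u$ and $\vec w$ be the nearest integer points on the two edges of $\cK_1$ incident to $\vec v_0$. Proposition \ref{prop:vertex_sprout} tells us that both $\{\vec v_0,\vec u\}$ and $\{\vec v_0,\vec w\}$ are bases of $\Z^2$. From the proof of Proposition \ref{prop:edge_vs_sprout}, the bijection $\phi$ sends the sprout at $\vec v_0$ to the edge of $\cK_2$ with endpoints $\vec v_0-\vec w$ and $\vec u-\vec v_0$. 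I label these two points $\vec v_{-1}$ and $\vec v_1$, with $\vec v_1$ designated as the one closer to $\cL_\alpha$ (this choice encodes the orientation convention discussed after Proposition \ref{prop:edge_vs_sprout}, and yields property (c) at the base step). The determinant $\det(\vec v_0,\vec v_0-\vec w)=-\det(\vec v_0,\vec w)=\pm1$, and the analogous computation for $\vec v_1$, settle the basis property (a) for the adjacent pairs containing $\vec v_0$.

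For the inductive step, suppose vertices $\ldots,\vec v_{k-2},\vec v_{k-1}$ have been defined with $\vec v_{k-1}\in\cK_1$ or $\cK_2$ according to the parity of $k-1$, and with properties (a)--(d) already verified up to index $k-1$. The sprout at $\vec v_{k-1}$ is parallel to $\vec v_{k-1}$ itself, and $\phi$ sends it to an edge of the opposite polygon, parallel to $\vec v_{k-1}$, whose integer length equals the integer angle at $\vec v_{k-1}$ (by the observation just after Definition \ref{def:vertex_sprout}). Tracing $\phi$ backwards through the construction in Proposition \ref{prop:edge_vs_sprout}, one of the endpoints of this edge is exactly $\vec v_{k-2}$: the two edges of the current polygon incident to $\vec v_{k-1}$ correspond via $\phi$ to two adjacent vertices of the other polygon, one of which is $\vec v_{k-2}$ by the previous inductive step. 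I define $\vec v_k$ to be the other endpoint. This gives (b) at index $k$; parallelism to $\vec v_{k-1}$ together with the length identification produces
\[
  \vec v_k-\vec v_{k-2}=\pm a_k\vec v_{k-1},
\]
where $a_k$ is simultaneously the integer length of $[\vec v_{k-2},\vec v_k]$ and the integer angle at $\vec v_{k-1}$. The sign is pinned down by the orientation rule, yielding (c) and (d). Finally, (a) follows at once from $\det(\vec v_{k-1},\vec v_k)=\det(\vec v_{k-1},\vec v_{k-2})=\pm1$.

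I expect the main obstacle to be purely a matter of bookkeeping: one must verify that $\vec v_{k-2}$ really is an endpoint of the edge $\phi(\text{sprout at }\vec v_{k-1})$, which is the reciprocity built into $\phi$ and must be unwound carefully from the explicit description of the endpoints $\vec v-\vec w$ and $\vec u-\vec v$ given in the proof of Proposition \ref{prop:edge_vs_sprout}. Once this tracking is in place the rest is mechanical. Uniqueness up to the choice of $\vec v_0$ then follows because the orientation condition (c) uniquely selects $\vec v_1$ between the two endpoints of $\phi(\text{sprout at }\vec v_0)$, after which the inductive rule propagates deterministically in both directions along $\Z$.
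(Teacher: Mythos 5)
Your proposal is correct and follows exactly the route the paper intends: the paper states this corollary without a written proof, as an immediate consequence of Proposition \ref{prop:edge_vs_sprout} together with the orientation remark preceding it (root of a sprout closer to $\cL_\alpha$ than its top), and your two-sided induction is precisely the careful unwinding of that correspondence. The one step you flag as bookkeeping --- that $\vec v_{k-2}$ is an endpoint of the edge corresponding to the sprout at $\vec v_{k-1}$ --- does check out via property (d) of Proposition \ref{prop:edge_vs_sprout}: the shared endpoint must be an endpoint of the sprout at $\vec v_{k-2}$ that is also a vertex, and since the top of a sprout is a multiple $(a+1)\vec v_{k-2}$ and never a vertex, it is the root $\vec v_{k-2}$.
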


Thus, we have the sequence $(a_k)_{k\in\Z}$ written twice along the boundaries of $\cK_1$ and $\cK_2$ (cf. Fig. \ref{fig:KP_and_CF} below).

\subsection{Korkina's lemma}

Given a basis $\vec v_{-2}$, $\vec v_{-1}$ of $\Z^2$ and a sequence $(a_k)_{k\in\Z}$ of positive integers the recurrence relation
\begin{equation} \label{eq:recurrence_relation}
  \vec v_k=\vec v_{k-2}+a_k\vec v_{k-1}
\end{equation}
determines the whole sequence $(\vec v_k)_{k\in\Z}$. 

\begin{proposition}[Korkina \cite{korkina_2dim}] \label{prop:korkina}
  Let $(a_k)_{k\in\Z}$ be an arbitrary sequence of positive integers and let $[\vec v_{-2},\vec v_0]$ be an integer segment of integer length $a_0$. Suppose that all the integer points that are closer to the line through $\vec v_{-2}$ and $\vec v_0$ than the origin belong to that line. Then there is a unique Klein polygon $\cK$ with vertices $\vec v_{2m}$, $m\in\Z$, such that for each integer $m$

  \textup{(a)} $[\vec v_{2m-2},\vec v_{2m}]$ is an edge of $\cK$;

  \textup{(b)} $a_{2m}$ equals the integer length of $[\vec v_{2m-2},\vec v_{2m}]$;

  \textup{\hskip0.23mm(c)\hskip0.23mm} $a_{2m+1}$ equals the integer angle at $\vec v_{2m}$.
\end{proposition}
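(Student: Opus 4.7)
My strategy is to reconstruct the full sequence $(\vec v_k)_{k\in\Z}$ from the recurrence \eqref{eq:recurrence_relation} and then identify the angle $\cC$ whose Klein polygon has the prescribed vertices $\vec v_{2m}$. I begin by setting $\vec v_{-1}=(\vec v_0-\vec v_{-2})/a_0$, the primitive integer vector along $[\vec v_{-2},\vec v_0]$, since $a_0$ is the integer length of this segment. The hypothesis that every integer point strictly closer to $\ell=\aff\{\vec v_{-2},\vec v_0\}$ than $\vec 0$ already lies on $\ell$ is equivalent to the parallelogram spanned by $\vec v_{-1}$ and $\vec v_{-2}$ having unit area, so $(\vec v_{-1},\vec v_{-2})$ — and hence $(\vec v_{-1},\vec v_0)$ — is a $\Z^2$-basis. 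I then extend by $\vec v_k=\vec v_{k-2}+a_k\vec v_{k-1}$ forward ($k\geq 1$) and by $\vec v_{k-2}=\vec v_k-a_k\vec v_{k-1}$ backward ($k\leq -1$); a short induction shows that every consecutive pair $(\vec v_{k-1},\vec v_k)$ remains a $\Z^2$-basis.

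Because $a_k\geq 1$, the standard convergence of continued fractions ensures that the rays $\R_{>0}\vec v_{2m}$ tend to distinct irrational limit rays $\cL^+$ and $\cL^-$ as $m\to+\infty$ and $m\to-\infty$; let $\cC$ be the open angle between them that contains all the $\vec v_{2m}$. The basis relation forces $\vec v_{2m-1}$ to lie on the line through $\vec 0$ parallel to $\aff\{\vec v_{2m-2},\vec v_{2m}\}$, and the recurrence $\vec v_{2m+2}=\vec v_{2m}+a_{2m+2}\vec v_{2m+1}$ with $\vec v_{2m+1}=\vec v_{2m-1}+a_{2m+1}\vec v_{2m}$ places $\vec v_{2m+2}$ strictly farther from $\vec 0$ than the line $\aff\{\vec v_{2m-2},\vec v_{2m}\}$. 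Consequently the broken line $\ldots\vec v_{-2}\vec v_0\vec v_2\ldots$ is convex and bounds a closed convex region $\cP\subset\overline{\cC}$ that avoids $\vec 0$.

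It remains to identify $\cP$ with the Klein polygon of $\cC$. I would argue by induction on $|m|$: for $m=0$ the claim is the given hypothesis, and the inductive step is to verify that whenever the strip between $\aff\{\vec v_{2m-2},\vec v_{2m}\}$ and its parallel through $\vec 0$ contains no integer point off the line itself, the analogous property holds for $\aff\{\vec v_{2m},\vec v_{2m+2}\}$. This reduces to a direct coordinate computation in the basis $(\vec v_{2m-1},\vec v_{2m})$, using only the recurrence and the positivity of the $a_k$. Granted this, property (a) is immediate, property (b) follows from $\vec v_{2m}-\vec v_{2m-2}=a_{2m}\vec v_{2m-1}$ together with the primitivity of $\vec v_{2m-1}$, and property (c) follows from Proposition \ref{prop:vertex_sprout}, which identifies the vertex sprout at $\vec v_{2m}$ with $[\vec v_{2m},\vec v_{2m}+a_{2m+1}\vec v_{2m+1}]$, of integer length $a_{2m+1}$. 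Uniqueness is automatic, since the recurrence determines all vertices from $\vec v_{-2}$ and $\vec v_0$. The only genuinely nontrivial step is the inductive propagation of the empty-strip hypothesis along the sequence of edges; everything else is routine bookkeeping.
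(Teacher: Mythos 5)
Your argument is correct and essentially the same as the paper's: set $\vec v_{-1}=(\vec v_0-\vec v_{-2})/a_0$, propagate unimodularity of consecutive pairs through the recurrence, and use convexity of the broken line $\ldots\vec v_{-2}\vec v_0\vec v_2\ldots$ to identify the region it bounds with the Klein polygon. The one step you defer --- the inductive propagation of the empty-strip property along the edges --- is not actually a separate induction: the strip for the $m$-th edge is empty precisely because $\det(\vec v_{2m-1},\vec v_{2m})=\pm1$, which you already established in your first paragraph, and this is exactly how the paper disposes of it (each triangle $\conv(\vec 0,\vec v_{2m-2},\vec v_{2m})$ contains no nonzero integer points off the side $[\vec v_{2m-2},\vec v_{2m}]$), so there is no gap.
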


\begin{proof}
  If such a $\cK$ exists, then by Corollary \ref{cor:naming_vertices} its vertices and the vertices of an adjacent Klein polygon should satisfy \eqref{eq:recurrence_relation}. This compels us to set $\vec v_{-1}=(\vec v_0-\vec v_{-2})/a_0$ and define the sequence $(\vec v_k)_{k\in\Z}$ by \eqref{eq:recurrence_relation}. Let us also denote
  \[ \Delta_k=\conv(\vec 0,\vec v_{k-2},\vec v_k). \]
  Vectors $\vec v_{-2},\vec v_{-1}$ form a basis of $\Z^2$. We may assume without loss of generality that $\det(\vec v_{-2},\vec v_{-1})=1$. Then it follows from \eqref{eq:recurrence_relation} that for each integer $k$ we have
  \[ \det(\vec v_{k-1},\vec v_k)=(-1)^{k-1}\quad\text{ and }\quad\det(\vec v_{k-2},\vec v_k)=(-1)^ka_k. \]
  The latter equality implies that every two triangles $\Delta_k$ and $\Delta_{k+2}$ share a common side and do not overlap.
  Moreover,
  \[ \det(\vec v_{k-2}-\vec v_k,\vec v_{k+2}-\vec v_k)=-a_ka_{k+2}\det(\vec v_{k-1},\vec v_{k+1})=(-1)^ka_ka_{k+1}a_{k+2}. \]
  Hence it follows that each quadrilateral $\Delta_k\cup\Delta_{k+2}$ is not convex, i.e. the broken line with vertices $\ldots,\vec v_{-2},\vec v_0,\vec v_2,\ldots$ bounds a convex region, a (generalised) convex polygon $\cK$, which borders every $\Delta_{2m}$, but does not overlap any of them. Thus, the union
  \[ \cK\cup\bigcup_{m\in\Z}\Delta_{2m} \]
  is an angle $\cC$ formed by some $\cL_\alpha$ and $\cL_\beta$. It remains to notice that all the nonzero integer points contained in $\Delta_k$ belong to its side $[\vec v_{k-2},\vec v_k]$, so that
  \[ \cK=\conv(\cC\cap\Z^2\backslash\{\vec 0\}). \]
\end{proof}

For any two segments $[\vec v_{-2},\vec v_0]$ and $[\vec v'_{-2},\vec v'_0]$ satisfying the hypothesis of Proposition \ref{prop:korkina} there is a unique operator $A\in\Gl_2(\Z)$ such that $A\vec v_{-2}=\vec v'_{-2}$ and $A\vec v_0=\vec v'_0$. Applying Proposition \ref{prop:korkina} we get the following useful statement.

\begin{corollary} \label{cor:korkina}
  Given two Klein polygons, suppose their 1-skeletons equipped with integer lengths of edges and integer angles at vertices are isomorphic. Then there is an operator in $\Gl_2(\Z)$ which maps one Klein polygon onto the other and respects this isomorphism.
\end{corollary}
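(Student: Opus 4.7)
The plan is to exploit the uniqueness assertion in Proposition \ref{prop:korkina}: a Klein polygon is completely determined by one of its edges $[\vec v_{-2},\vec v_0]$ (satisfying the separation hypothesis there) together with the bi-infinite sequence $(a_k)_{k\in\Z}$ of integer lengths and integer angles. So it suffices to produce a single $A\in\Gl_2(\Z)$ matching one edge of $\cK$ onto the corresponding edge of $\cK'$; the uniqueness will then propagate the match from one edge to every vertex, and hence to the whole polygon.

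First I would enumerate the vertices of $\cK$ as $\vec v_{2m}$, $m\in\Z$, as in Corollary \ref{cor:naming_vertices}, and transfer this labelling to $\cK'$ along the given isomorphism, writing its vertices as $\vec v'_{2m}$, so that corresponding edges and vertices share the common integer lengths $a_{2m}$ and integer angles $a_{2m+1}$. Picking any matched pair of edges, say $[\vec v_{-2},\vec v_0]$ and $[\vec v'_{-2},\vec v'_0]$ — both of integer length $a_0$, and both satisfying the separation hypothesis of Proposition \ref{prop:korkina} because they are edges of Klein polygons — the remark preceding the corollary supplies a unique $A\in\Gl_2(\Z)$ with $A\vec v_{-2}=\vec v'_{-2}$ and $A\vec v_0=\vec v'_0$. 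Setting $\vec v_{-1}=(\vec v_0-\vec v_{-2})/a_0$ and $\vec v'_{-1}=(\vec v'_0-\vec v'_{-2})/a_0$, one automatically has $A\vec v_{-1}=\vec v'_{-1}$.

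Applying $A$ to the recurrence \eqref{eq:recurrence_relation} with the common scalars $(a_k)$ then gives $A\vec v_k=\vec v'_k$ for every $k\in\Z$. Because $\Gl_2(\Z)$ preserves $\Z^2$, integer lengths, and integer angles, the image $A(\cK)$ is again a Klein polygon whose boundary carries the initial edge $[\vec v'_{-2},\vec v'_0]$ and the intrinsic sequence $(a_k)$; by the uniqueness part of Proposition \ref{prop:korkina} it must coincide with $\cK'$, and the induced bijection of vertices is precisely the prescribed isomorphism. The one sub-step that is worth double-checking is that the separation hypothesis for the new edge $[\vec v'_{-2},\vec v'_0]$ is inherited from the one for $[\vec v_{-2},\vec v_0]$; but this is clear, since the lattice distance of an integer point to a lattice line is a $\Gl_2(\Z)$-invariant integer, so the ordering of lattice points by distance to the supporting line is preserved by $A$.
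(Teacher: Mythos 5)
Your proposal is correct and follows essentially the same route as the paper, which derives the corollary by taking the unique $A\in\Gl_2(\Z)$ matching a pair of corresponding edges and then invoking the uniqueness part of Proposition \ref{prop:korkina}; you merely spell out the propagation along the recurrence \eqref{eq:recurrence_relation} in more detail than the paper does.
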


\subsection{Klein polygons and continued fractions} \label{sec:KP_and_CF}

\begin{proposition} \label{prop:deep_recursion}
  Within the notation of Corollary \ref{cor:naming_vertices} fix $k,m\in\Z$, $k\geq m$, and define $p$ and $q$ by $\vec v_k=q\vec v_{m-2}+p\vec v_{m-1}$. Then $p$ and $q$ are coprime integers and
  \[ \frac pq=[a_m;a_{m+1},\ldots,a_k]. \]
\end{proposition}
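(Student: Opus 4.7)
The plan is to expand $\vec v_k$ in the basis $(\vec v_{m-2},\vec v_{m-1})$ of $\Z^2$ (which is a basis by Corollary \ref{cor:naming_vertices}(a)) and to observe that the coefficients satisfy the classical continued fraction convergent recurrence. Concretely, for each integer $k\geq m-2$ define integers $p_k,q_k$ by
\[ \vec v_k=q_k\vec v_{m-2}+p_k\vec v_{m-1}. \]
The base cases are immediate: $(q_{m-2},p_{m-2})=(1,0)$ and $(q_{m-1},p_{m-1})=(0,1)$, so that $(q_m,p_m)=(1,a_m)$ by \eqref{eq:recurrence_relation}.

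Next I would substitute \eqref{eq:recurrence_relation} into the defining expansion and compare coefficients in the basis $(\vec v_{m-2},\vec v_{m-1})$. This gives the pair of recurrences
\[ p_k=a_kp_{k-1}+p_{k-2},\qquad q_k=a_kq_{k-1}+q_{k-2}, \]
valid for $k\geq m$. These are exactly the recurrences governing the numerators and denominators of the convergents of the finite continued fraction $[a_m;a_{m+1},\ldots,a_k]$, and the initial data $(p_{m-2},q_{m-2})=(0,1)$, $(p_{m-1},q_{m-1})=(1,0)$ match the standard conventions $p_{-2}=0,q_{-2}=1,p_{-1}=1,q_{-1}=0$ after the shift $j=k-m$. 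Therefore, by the classical theory of continued fractions,
\[ \frac{p_k}{q_k}=[a_m;a_{m+1},\ldots,a_k]. \]

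Finally, for coprimality, I would use Corollary \ref{cor:naming_vertices}(a) once more: both $(\vec v_{m-2},\vec v_{m-1})$ and $(\vec v_{k-1},\vec v_k)$ are bases of $\Z^2$, so the change-of-basis matrix $\left(\begin{smallmatrix}q_{k-1}&q_k\\p_{k-1}&p_k\end{smallmatrix}\right)$ lies in $\Gl_2(\Z)$ and has determinant $\pm 1$. Hence $p_kq_{k-1}-p_{k-1}q_k=\pm 1$, which forces $\gcd(p_k,q_k)=1$. There is no real obstacle here; the only thing to verify with care is the bookkeeping in the induction, the crucial point being the coincidence between the lattice recurrence \eqref{eq:recurrence_relation} and the classical convergent recurrence, so that the two sequences can be identified term by term.
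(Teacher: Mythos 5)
Your proof is correct, but it runs in the opposite direction from the paper's. You fix $m$ and induct upward on $k$, showing that the coefficients of $\vec v_k$ in the basis $(\vec v_{m-2},\vec v_{m-1})$ obey the classical convergent recurrences $p_k=a_kp_{k-1}+p_{k-2}$, $q_k=a_kq_{k-1}+q_{k-2}$ with the standard initial data, and then you invoke the classical theorem that these recurrences generate the convergents of $[a_m;a_{m+1},\ldots,a_k]$. The paper instead fixes $k$ and inducts \emph{downward} on $m$: assuming $\vec v_k=q\vec v_{m-2}+p\vec v_{m-1}$ with $p/q=[a_m;\ldots,a_k]$, it substitutes $\vec v_{m-1}=\vec v_{m-3}+a_{m-1}\vec v_{m-2}$ from \eqref{eq:recurrence_relation} to get $\vec v_k=p\bigl(\vec v_{m-3}+(a_{m-1}+q/p)\vec v_{m-2}\bigr)$, so the new ratio is $a_{m-1}+1/(p/q)=[a_{m-1};a_m,\ldots,a_k]$ directly from the recursive definition of a finite continued fraction. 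The paper's route is thus self-contained in one line and needs no external fact about convergents, whereas yours outsources that step to ``classical theory'' (which is itself usually proved by essentially the computation the paper performs); on the other hand, your version makes explicit that the coefficient pairs $(q_k,p_k)$ \emph{are} the convergents of the tail, which is the content later exploited in Proposition \ref{prop:KP_and_CF}. For coprimality the paper simply notes that $\vec v_k$ is primitive, so its coordinates in any basis of $\Z^2$ are coprime; your determinant argument via the change of basis between $(\vec v_{m-2},\vec v_{m-1})$ and $(\vec v_{k-1},\vec v_k)$ is a slightly longer but equally valid way to say the same thing.
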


\begin{proof}
  Coprimality follows immediately from the fact that $\vec v_k$ is primitive. The rest is proven by induction on $m$ while $k$ is fixed. The base case $m=k$ repeats statement \textup{(d)} of Corollary \ref{cor:naming_vertices}. The inductive step from $m$ to $m-1$ is provided by the relation
  \begin{multline*}
    \vec v_k=q\vec v_{m-2}+p\vec v_{m-1}= \\
    =q\vec v_{m-2}+p(\vec v_{m-3}+a_{m-1}\vec v_{m-2})=\qquad\qquad \\
    =p\big(\vec v_{m-3}+(a_{m-1}+q/p)\vec v_{m-2}\big).
  \end{multline*}
\end{proof}

Proposition \ref{prop:deep_recursion} already allows us to call the boundaries of two adjacent Klein polygons a \emph{geometric continued fraction} (cf. \cite{karpenkov_book}). But let us show how the sequence $(a_k)_{k\in\Z}$ of integer lengths and angles written along those boundaries is connected to the sequences of partial quotients of $\alpha$ and $\beta$. Most explicitly it is observed if
\begin{equation} \label{eq:nice_alpha_beta}
  \alpha>1,\ \ -1<\beta<0.
\end{equation}
In this case the points $(1,0)$ and $(0,1)$ are vertices of $\cK_1$ and $\cK_2$ (see Fig. \ref{fig:KP_and_CF}) and we can set
\begin{equation} \label{eq:initiation}
  \vec v_{-2}=(1,0),\ \ \vec v_{-1}=(0,1).
\end{equation}

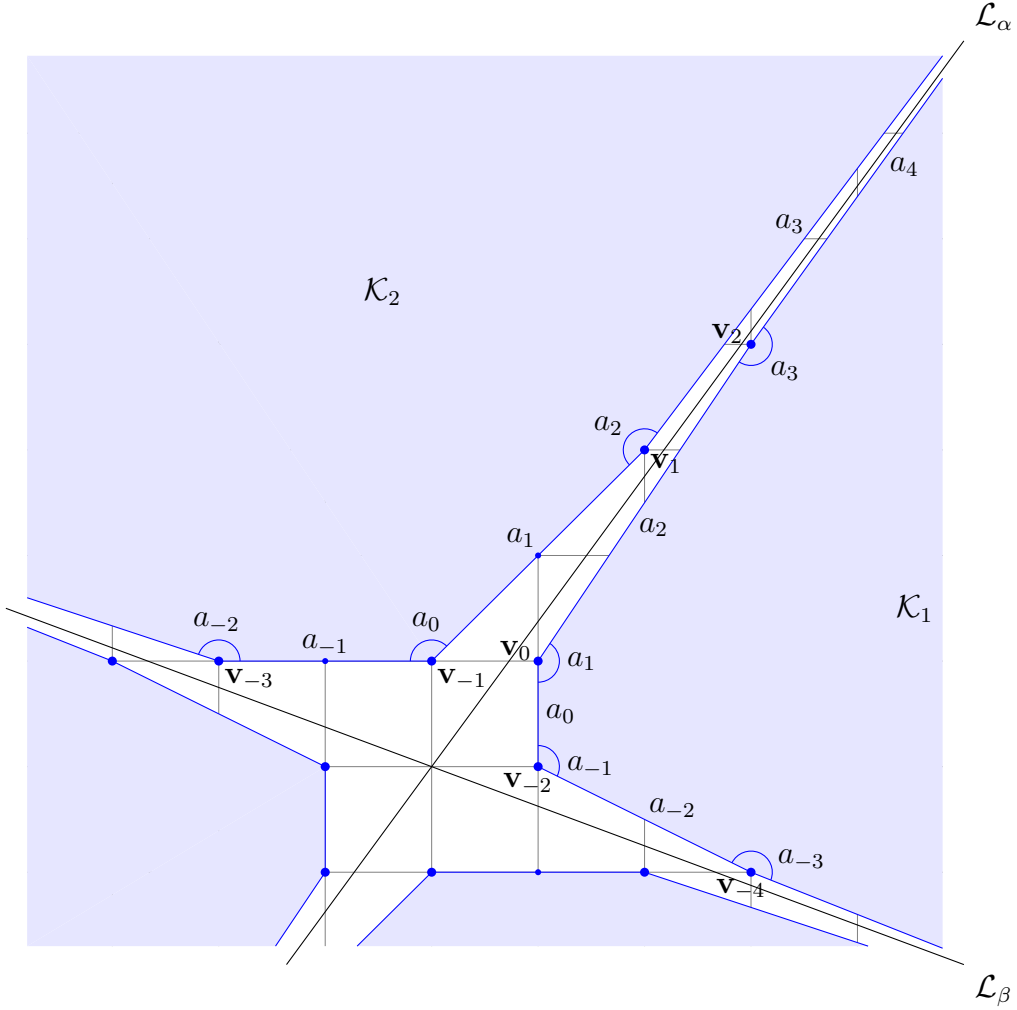
\begin{figure}[h]
  \centering
  \begin{tikzpicture}[scale=1.4]
    \draw[very thin,color=gray,scale=1] (-3.8,-1.7) grid (4.8,55/8-0.2);


    \draw[color=black] plot[domain=-15/11:5] (\x, {11*\x/8}) node[above right]{$\cL_\alpha$};
    \draw[color=black] plot[domain=-4:5] (\x, {-3*\x/8}) node[below right]{$\cL_\beta$};


    \fill[blue!10!,path fading=east]
        (3+1.8,4+1.8*7/5) -- (3,4) -- (1,1) -- (1,0) -- (3,-1) -- (3+1.8,-1-1.8*2/5) -- cycle;
    \fill[blue!10!,path fading=north]
        (2+2.8,3+2.8*4/3) -- (2,3) -- (0,1) -- (-2-1.8,3+2.8*4/3) -- cycle;
    \fill[blue!10!,path fading=west]
        (-2-1.8,3+2.8*4/3) -- (0,1) -- (-2,1) -- (-2-1.8,1+1.8/3) -- cycle;
    \fill[blue!10!,path fading=south]
        (-0.7,-1.7) -- (0,-1) -- (2,-1) -- (4.1,-1.7) -- cycle;
    \fill[blue!10!,path fading=west]
        (-3.8,-1.7) -- (-1,0) -- (-3,1) -- (-3.8,1+0.8*2/5) -- cycle;
    \fill[blue!10!,path fading=south]
        (-1-0.7*2/3,-1.7) -- (-1,-1) -- (-1,0) -- (-3.8,-1.7) -- cycle;

    \draw[color=blue] (3+1.8,4+1.8*7/5) -- (3,4) -- (1,1) -- (1,0) -- (3,-1) -- (3+1.8,-1-1.8*2/5);
    \draw[color=blue] (2+2.8,3+2.8*4/3) -- (2,3) -- (0,1) -- (-2,1) -- (-2-1.8,1+1.8/3);
    \draw[color=blue] (-0.7,-1.7) -- (0,-1) -- (2,-1) -- (4.1,-1.7);
    \draw[color=blue] (-1-0.7*2/3,-1.7) -- (-1,-1) -- (-1,0) -- (-3,1) -- (-3.8,1+0.8*2/5);


    \node[fill=blue,circle,inner sep=1.2pt] at (3,4) {};
    \node[fill=blue,circle,inner sep=1.2pt] at (1,1) {};
    \node[fill=blue,circle,inner sep=1.2pt] at (1,0) {};
    \node[fill=blue,circle,inner sep=1.2pt] at (3,-1) {};
    \node[fill=blue,circle,inner sep=1.2pt] at (2,3) {};
    \node[fill=blue,circle,inner sep=1.2pt] at (0,1) {};
    \node[fill=blue,circle,inner sep=1.2pt] at (-2,1) {};

    \node[fill=blue,circle,inner sep=1.2pt] at (-1,-1) {};
    \node[fill=blue,circle,inner sep=1.2pt] at (-1,0) {};
    \node[fill=blue,circle,inner sep=1.2pt] at (-3,1) {};
    \node[fill=blue,circle,inner sep=1.2pt] at (0,-1) {};
    \node[fill=blue,circle,inner sep=1.2pt] at (2,-1) {};

    \node[fill=blue,circle,inner sep=0.8pt] at (1,-1) {};
    \node[fill=blue,circle,inner sep=0.8pt] at (-1,1) {};
    \node[fill=blue,circle,inner sep=0.8pt] at (1,2) {};

    \node[right] at (1-0.03,0.5) {$a_0$}; 
    \node[right] at (2-2/13,2.5-3/13) {$a_2$};
    \node[right] at (3+1.2,4+1.2*7/5) {$a_4$};
    \node[right] at (2-0.06,-0.4) {$a_{-2}$};

    \node[above left] at (1.08,2-0.02) {$a_1$};
    \node[left] at (2+1.5*1.07,3+2*1.07) {$a_3$};
    \node[above] at (-1,0.95) {$a_{-1}$};

    \draw[blue] ([shift=({atan(-1/2)}:0.2)]1,0) arc (atan(-1/2):90:0.2);
    \draw[blue] ([shift=({atan(-2/5)}:0.2)]3,-1) arc (atan(-2/5):90+atan(2):0.2);
    \draw[blue] ([shift=(-90:0.2)]1,1) arc (-90:atan(3/2):0.2);
    \draw[blue] ([shift=({-90-atan(2/3)}:0.2)]3,4) arc (-90-atan(2/3):atan(7/5):0.2);
    \draw[blue] ([shift=({atan(4/3)}:0.2)]2,3) arc (atan(4/3):225:0.2);
    \draw[blue] ([shift=(45:0.2)]0,1) arc (45:180:0.2);
    \draw[blue] ([shift=(0:0.2)]-2,1) arc (0:90+atan(3):0.2);

    \node[right] at (3.15,-0.88) {$a_{-3}$};
    \node[right] at (1.17,0) {$a_{-1}$};
    \node[right] at (1.17,1) {$a_1$};
    \node[right] at (3.08,3.75) {$a_3$};
    \node[left] at (2-0.1,3.23) {$a_2$};
    \node[above] at (-0.05,1.18) {$a_0$};
    \node[above] at (-2.02,1.16) {$a_{-2}$};

    \draw (1+0.4,-0.16) node[left,circle,inner sep=7pt]{$\vec v_{-2}$};
    \draw (-0.21,1-0.17) node[right,circle,inner sep=7pt]{$\vec v_{-1}$};
    \draw (-2.21,1-0.17) node[right,circle,inner sep=7pt]{$\vec v_{-3}$};
    \node[left,circle,inner sep=7pt] at (3+0.2,4.1) {$\vec v_2$};
    \node[right,circle,inner sep=7pt] at (2-0.22,3-0.13) {$\vec v_1$};
    \draw (3+0.4,-1.16) node[left,circle,inner sep=7pt]{$\vec v_{-4}$};
    \draw (1+0.32,1.1) node[left,circle,inner sep=10pt]{$\vec v_0$};

    \draw (4.26,1.5) node[right]{$\cK_1$};
    \draw (-0.74,4.5) node[right]{$\cK_2$};
  \end{tikzpicture}
  \caption{Klein polygons and continued fractions} \label{fig:KP_and_CF}
\end{figure}

\begin{proposition} \label{prop:KP_and_CF}
  Let $\alpha$ and $\beta$ satisfy \eqref{eq:nice_alpha_beta}. Let $(\vec v_k)_{k\in\Z}$, $(a_k)_{k\in\Z}$ be defined by \eqref{eq:initiation} and Corollary \ref{cor:naming_vertices}. Then
  \[ \alpha=[a_0;a_1,a_2,\ldots]\quad\text{ and }\quad-1/\beta=[a_{-1};a_{-2},a_{-3},\ldots]. \]
  Moreover, for each $k\geq0$ we have $\vec v_k=(q_k,p_k)$, where $p_k$ and $q_k$ are the numerator and denominator of the $k^\text{th}$ convergent to $\alpha$.
\end{proposition}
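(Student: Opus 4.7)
The plan is to identify the vertices $\vec v_k$ with the standard convergents to $\alpha$ and then read off both continued fractions from the behaviour of $\vec v_k$ at $\pm\infty$. Define $(p_k)_{k\in\Z}$, $(q_k)_{k\in\Z}$ by the initial conditions $p_{-2}=0$, $p_{-1}=1$, $q_{-2}=1$, $q_{-1}=0$ and the two-sided recurrence $p_k=a_k p_{k-1}+p_{k-2}$, $q_k=a_k q_{k-1}+q_{k-2}$. Since $\vec v_{-2}=(q_{-2},p_{-2})$ and $\vec v_{-1}=(q_{-1},p_{-1})$, a straightforward induction (forward and backward) based on \eqref{eq:recurrence_relation} yields $\vec v_k=(q_k,p_k)$ for every $k\in\Z$.

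\textbf{The positive tail.} For $k\geq 0$ Proposition \ref{prop:deep_recursion} applied with $m=0$ to the trivial decomposition $\vec v_k=q_k\vec v_{-2}+p_k\vec v_{-1}$ gives $p_k/q_k=[a_0;a_1,\ldots,a_k]$, confirming that the $(p_k,q_k)$ are indeed the convergents of $[a_0;a_1,a_2,\ldots]$. It remains to show this limit equals $\alpha$. The $\vec v_k$ all lie inside the angle $\cC$ between $\cL_\alpha$ and $\cL_\beta$, and Corollary \ref{cor:naming_vertices}\,(c) makes the Euclidean distance from $\vec v_k$ to $\cL_\alpha$ monotone decreasing in $k$ along each parity class; in particular these distances are bounded. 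Combined with $|\vec v_k|\to\infty$ (forced by $a_k\geq 1$), this forces $\sin(\angle(\vec v_k,\cL_\alpha))\to 0$, so $\vec v_k/|\vec v_k|$ tends to the unit direction of $\cL_\alpha$, and hence $p_k/q_k\to\alpha$.

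\textbf{The negative tail.} Running the same geometric argument symmetrically as $k\to-\infty$ gives $p_k/q_k\to\beta$, equivalently $-q_k/p_k\to-1/\beta$. To rewrite $-q_k/p_k$ as an explicit continued fraction in $a_{-1},a_{-2},\ldots$, I would prove by downward induction on $k$, starting from $k=-3$, the mirror identity
\[ -\frac{q_k}{p_k}=[a_{-1};a_{-2},\ldots,a_{k+2}]. \]
The base case $k=-3$ is immediate from $\vec v_{-3}=\vec v_{-1}-a_{-1}\vec v_{-2}=(-a_{-1},1)$. The induction step mimics the proof of Proposition \ref{prop:deep_recursion} but uses the backward form $\vec v_{k-2}=\vec v_k-a_k\vec v_{k-1}$ of the recurrence; the extra minus sign that appears here is precisely what turns the limit into $-1/\beta$ rather than $\beta$. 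Passing to the limit gives $-1/\beta=[a_{-1};a_{-2},a_{-3},\ldots]$. The main care throughout the proof is the sign and index bookkeeping in this backward induction; once that is handled, the rest reduces to Proposition \ref{prop:deep_recursion} together with the convexity-plus-monotonicity observation used for $\alpha$.
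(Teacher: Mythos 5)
Your argument is correct, and for the first half it coincides with the paper's: both identify $\vec v_k=(q_k,p_k)$ via Proposition \ref{prop:deep_recursion} with $m=0$ and then deduce $p_k/q_k\to\alpha$ from the fact that the vertices approach $\cL_\alpha$ (you spell out the ``bounded distance to the line plus $|\vec v_k|\to\infty$'' mechanism that the paper merely asserts, which is a welcome extra detail). Where you genuinely diverge is the expansion of $-1/\beta$. The paper disposes of it in one stroke by rotating the whole configuration by $\pi/2$: this sends $\cL_\alpha,\cL_\beta$ to $\cL_{-1/\beta},\cL_{-1/\alpha}$, the new pair again satisfies \eqref{eq:nice_alpha_beta} with the roles swapped, and the already-proved half of the proposition applies verbatim --- no sign or index bookkeeping at all. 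You instead prove the mirror identity $-q_k/p_k=[a_{-1};a_{-2},\ldots,a_{k+2}]$ for $k\leq-3$ by downward induction from the backward recurrence $\vec v_{k-2}=\vec v_k-a_k\vec v_{k-1}$, and pass to the limit using $p_k/q_k\to\beta$. I checked the identity (e.g. $\vec v_{-4}=(1+a_{-1}a_{-2},\,-a_{-2})$ gives $[a_{-1};a_{-2}]$, as required), and the induction does go through: reindexing $\vec w_j=(-1)^j\vec v_{-3-j}$ turns the backward recurrence into a forward one with partial quotients $a_{-1-j}$ and initial basis $(0,1),(-1,0)$, after which Proposition \ref{prop:deep_recursion} applies directly. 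So your route costs more computation and careful sign-tracking but is entirely self-contained and makes the reversal of the index sequence explicit, whereas the paper's rotation trick is shorter and exhibits the underlying symmetry; both are valid.
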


\begin{proof}
  Proposition \ref{prop:deep_recursion} with $m=0$ under condition \eqref{eq:initiation} immediately implies that $\vec v_k=(q_k,p_k)$ with coprime $p_k$ and $q_k$ such that
  \[ \frac{p_k}{q_k}=[a_0;a_1,\ldots,a_k]. \]
  Furthermore, the points $\vec v_k$ tend to $\cL_\alpha$ as $k\to\infty$. Particularly, $p_k/q_k\to\alpha$ as $k\to\infty$. Hence
  \[ \alpha=\lim_{k\to\infty}[a_0;a_1,\ldots,a_k]=[a_0;a_1,a_2,\ldots]. \]

  As for the statement concerning the expansion of $-1/\beta$, it follows from the one concerning $\alpha$, since the rotation of the whole construction by $\pi/2$ turns $\cL_\alpha$ and $\cL_\beta$ into $\cL_{-1/\beta}$ and $\cL_{-1/\alpha}$ respectively, and we have $-1/\beta>1$ and $-1<-1/\alpha<0$.
\end{proof}

If $\alpha>\beta$ but \eqref{eq:nice_alpha_beta} does not hold, the correspondence demonstrated in Proposition \ref{prop:KP_and_CF} is no longer exact, because then the segment with the endpoints $(1,0)$ and $([\alpha],0)$ fails to be an edge of the corresponding Klein polygon. One can easily prove the following.

\begin{proposition} \label{prop:nice_alpha_beta_criterion}
  If $\alpha>\beta$, then \eqref{eq:nice_alpha_beta} is equivalent to any of the following statements:

  \textup{(a)} the points $(1,0)$ and $(0,1)$ are vertices of adjacent Klein polygons corresponding to $\cL_\alpha$ and $\cL_\beta$;

  \textup{(b)} the segment with the endpoints $(1,0)$ and $(1,[\alpha])$ is an edge of a Klein polygon corresponding to $\cL_\alpha$ and $\cL_\beta$.
\end{proposition}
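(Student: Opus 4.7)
The plan is to split the statement into two equivalences, $(b)\Leftrightarrow\eqref{eq:nice_alpha_beta}$, handled by a direct examination of the slice $\cK_1\cap\{x=1\}$, and $(a)\Leftrightarrow(b)$, handled via the edge--sprout correspondence of Proposition \ref{prop:edge_vs_sprout}.

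For $(b)\Leftrightarrow\eqref{eq:nice_alpha_beta}$: the segment $[(1,0),(1,[\alpha])]$ is non-degenerate only when $\alpha>1$, so assume this. Under $\alpha>0>\beta$, the point $(1,0)$ lies strictly inside the angle $\cC_1$ corresponding to $\cK_1$, so a Klein polygon containing the segment must be $\cK_1$. Since every integer point of $\cC_1$ satisfies $x\geq 1$, the convex hull $\cK_1$ lies in $\{x\geq 1\}$, and any convex combination of points of $\cK_1$ with first coordinate $1$ must use only points with $x=1$; hence $\cK_1\cap\{x=1\}=\{1\}\times[\lceil\beta\rceil,[\alpha]]$. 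Being a maximal collinear piece of $\partial\cK_1$, this slice is an edge of $\cK_1$, and it coincides with $[(1,0),(1,[\alpha])]$ exactly when $\lceil\beta\rceil=0$, i.e.\ when $-1<\beta<0$; combined with $\alpha>1$ this is precisely \eqref{eq:nice_alpha_beta}.

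For $(b)\Rightarrow(a)$: apply Proposition \ref{prop:edge_vs_sprout}, which matches the edge $[(1,0),(1,[\alpha])]$ of $\cK_1$ with a sprout of the adjacent polygon $\cK_2$ parallel to $(0,1)$ and of integer length $[\alpha]$. Tracing the translation $\vec v-\vec w\mapsto(1,0)$ used in the proof of Proposition \ref{prop:edge_vs_sprout}, the root $\vec v$ of this sprout is identified as $(0,1)$, making $(1,0)$ and $(0,1)$ vertices of adjacent Klein polygons and establishing (a).

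For $(a)\Rightarrow(b)$: read (a) through the enumeration of Corollary \ref{cor:naming_vertices}, putting $\vec v_{-2}=(1,0)$ and $\vec v_{-1}=(0,1)$, a basis of $\Z^2$ with determinant $1$. The recurrence \eqref{eq:recurrence_relation} then produces $\vec v_0=(1,a_0)$ with $[\vec v_{-2},\vec v_0]$ an edge of $\cK_1$. Since $\vec v_0$ is a vertex of $\cK_1$, the coefficient $a_0$ equals the largest integer $y$ with $(1,y)\in\cC_1$, namely $a_0=[\alpha]$, which forces $\alpha>1$; and for $(1,0)$ to be a vertex with outgoing edge in the $(0,1)$-direction, $(1,0)$ must be the lowest integer point of the slice $\cK_1\cap\{x=1\}$, i.e.\ $\lceil\beta\rceil=0$, so $-1<\beta<0$. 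Therefore the edge $[(1,0),(1,[\alpha])]$ exists in $\cK_1$, establishing (b).

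The main obstacle is the direction $(b)\Rightarrow(a)$: correctly pinning down $(0,1)$ as the root of the sprout corresponding to $[(1,0),(1,[\alpha])]$ requires careful bookkeeping of the translation used in the proof of Proposition \ref{prop:edge_vs_sprout}, since direction and integer length alone determine a sprout only up to translation.
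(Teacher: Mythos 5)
The genuine gap is in your direction $(a)\Rightarrow(b)$, not in $(b)\Rightarrow(a)$ (which is fine, as is $(b)\Leftrightarrow\eqref{eq:nice_alpha_beta}$ up to the small omissions noted below). You begin by ``putting $\vec v_{-2}=(1,0)$ and $\vec v_{-1}=(0,1)$'', but Corollary \ref{cor:naming_vertices} leaves you free only to choose the \emph{initial} vertex: which lattice points occur as a consecutive pair in the enumeration, and in which \emph{order}, is dictated by the geometry, and statement (a) gives you neither piece of information. The order is precisely what makes $\vec v_0=\vec v_{-2}+a_0\vec v_{-1}$ point upwards from $(1,0)$, which is the whole content of (b). To see that this cannot be repaired by bookkeeping alone, reflect Fig.~\ref{fig:KP_and_CF} in the line $y=x$, i.e.\ take $0<\alpha<1$ and $\beta<-1$. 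Then the face of the Klein polygon through $(1,0)$ on the line $x=1$ is $\{1\}\times[\lceil\beta\rceil,0]$, so $(1,0)$ is a vertex whose vertical edge goes \emph{downwards}, while $(0,1)$ is an endpoint of the horizontal edge $[0,[1/\alpha]]\times\{1\}$ of the adjacent polygon, hence also a vertex: (a) holds, yet $[\alpha]=0$ and (b) fails. (In the enumeration the two points are indeed consecutive, but in the order $\vec v_{k-1}=(0,1)$, $\vec v_k=(1,0)$.) The underlying reason is that (a) as literally stated is invariant under the reflection $x\leftrightarrow y$, whereas \eqref{eq:nice_alpha_beta} and (b) are not; so any proof of $(a)\Rightarrow(b)$ must either exclude this mirror configuration by an explicit argument or build the orientation into the reading of (a). Your write-up does neither, and it also misidentifies $(b)\Rightarrow(a)$ as the delicate step.

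Two smaller omissions in $(b)\Leftrightarrow\eqref{eq:nice_alpha_beta}$: the segment $[(1,0),(1,[\alpha])]$ is also non-degenerate when $\alpha<0$ (then $[\alpha]\le-1$), so that case must be excluded separately (the open segment then meets $\cL_\alpha$ or $\cL_\beta$, hence lies in no single angle); and in the implication $(b)\Rightarrow\eqref{eq:nice_alpha_beta}$ you assume $\beta<0$ from the outset (``Under $\alpha>0>\beta$\dots''), whereas the case $\beta>0$ must also be ruled out (if $0<\beta<[\alpha]$ the segment crosses $\cL_\beta$; if $\beta>[\alpha]$ the relevant angle contains integer points with $x=2$ as well as with $x\le-1$, so $x=1$ is not a supporting line and the slice is not an edge). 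Both are routine, but as written the ``only if'' half of that equivalence is incomplete.
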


Since integer lengths and angles are invariant under the action of operators from $\Gl_2(\Z)$, for arbitrary $\alpha$ and $\beta$ the exact correspondence is restored as follows.

\begin{proposition} \label{prop:KP_and_CF_arbitrary_case}
  Let $\cK_1$ and $\cK_2$ be adjacent Klein polygons corresponding to $\cL_\alpha$ and $\cL_\beta$ separated by $\cL_\alpha$. Let $(\vec v_k)_{k\in\Z}$, $(a_k)_{k\in\Z}$ be as in Corollary \ref{cor:naming_vertices}, with arbitrary choice of $\vec v_0$. Then there is a unique operator $A\in\Gl_2(\Z)$ such that

  \textup{(a)} $A\vec v_{-2}=(1,0)$, $A\vec v_{-1}=(0,1)$, $A\vec v_0=(1,a_0)$;

  \textup{(b)} $A\cL_\alpha=\cL_{\alpha'}$, $A\cL_\beta=\cL_{\beta'}$ (particularly, $\alpha\sim\alpha'$, $\beta\sim\beta'$);

  \textup{\hskip0.23mm(c)\hskip0.23mm} $\alpha'>1$, $-1<\beta'<0$;

  \textup{(d)} the sequence $(a_k)$ coincides with the sequence obtained by gluing together the sequences of partial quotients of $\alpha'$ and $-1/\beta'$.
\end{proposition}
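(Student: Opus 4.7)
The plan is to construct $A$ by specifying its action on the basis $\{\vec v_{-2},\vec v_{-1}\}$ and then verify (a)--(d) by transporting all structural data through $A$, exploiting the fact that everything involved (the lattice $\Z^2$, the convex hull construction, integer lengths and integer angles) is $\Gl_2(\Z)$-invariant.

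First, by Corollary \ref{cor:naming_vertices}(a) (with $k=-1$) the vectors $\vec v_{-2},\vec v_{-1}$ form a basis of $\Z^2$, so there is a unique $\Z$-linear operator $A$ sending $\vec v_{-2}\mapsto(1,0)$ and $\vec v_{-1}\mapsto(0,1)$; this $A$ automatically lies in $\Gl_2(\Z)$ and is the unique operator satisfying (a), since $\vec v_0=\vec v_{-2}+a_0\vec v_{-1}$ forces $A\vec v_0=(1,a_0)$. For (b), I would argue that $A\cL_\alpha$ is a line through the origin containing no nonzero integer points (because $\cL_\alpha$ has this property and $A$ preserves $\Z^2$), so in particular it is not the vertical axis and hence equals $\cL_{\alpha'}$ for a unique irrational $\alpha'$; the same reasoning applies to $\beta$. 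Writing $A=\bigl(\begin{smallmatrix}a&b\\c&d\end{smallmatrix}\bigr)$ and comparing directions of $(1,\alpha)$ and its image yields $\alpha'=(c+d\alpha)/(a+b\alpha)$ with $ad-bc=\pm1$, so Serret's theorem (quoted in the introduction) gives $\alpha\sim\alpha'$, and likewise $\beta\sim\beta'$.

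For (c), I would observe that $A$ carries the pair of adjacent Klein polygons $\cK_1,\cK_2$ for $\cL_\alpha,\cL_\beta$ to adjacent Klein polygons for $\cL_{\alpha'},\cL_{\beta'}$ separated by $\cL_{\alpha'}$, because the defining convex hull construction is $\Gl_2(\Z)$-equivariant. By Corollary \ref{cor:naming_vertices}(b) the even-indexed $\vec v_{-2}$ is a vertex of $\cK_1$ and the odd-indexed $\vec v_{-1}$ is a vertex of $\cK_2$, so their images $(1,0)$ and $(0,1)$ are vertices of adjacent Klein polygons attached to $\cL_{\alpha'},\cL_{\beta'}$. Proposition \ref{prop:nice_alpha_beta_criterion}(a) then delivers \eqref{eq:nice_alpha_beta}, i.e.\ $\alpha'>1$ and $-1<\beta'<0$. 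Statement (d) is now automatic: integer lengths and integer angles are preserved by $A$, so the sequence $(a_k)$ read off from $A\cK_1,A\cK_2$ with the initial basis $(1,0),(0,1)$ coincides with the original one; Proposition \ref{prop:KP_and_CF} then identifies $(a_0,a_1,a_2,\ldots)$ with the partial quotients of $\alpha'$ and $(a_{-1},a_{-2},\ldots)$ with those of $-1/\beta'$.

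The main point requiring care is the invocation of Proposition \ref{prop:nice_alpha_beta_criterion}, whose hypothesis is $\alpha'>\beta'$. I would pin this down by noting that $\cL_{\alpha'}=A\cL_\alpha$ separates $A\cK_1$ from $A\cK_2$, hence separates $(1,0)\in A\cK_1$ from $(0,1)\in A\cK_2$; since $\cL_{\alpha'}$ is the line $y=\alpha' x$ this forces $\alpha'>0$, and then the vertex condition promotes $\alpha'>0$ to $\alpha'>1$ (and dually $\beta'\in(-1,0)$), as in Proposition \ref{prop:nice_alpha_beta_criterion}. Once this orientation bookkeeping is settled the rest of the proof is a direct transport argument through $A$.
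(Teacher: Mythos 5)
Your argument is correct, and since the paper explicitly leaves the proof of this proposition to the reader, there is no authorial proof to diverge from: what you give is evidently the intended argument (define $A$ on the basis $\vec v_{-2},\vec v_{-1}$, invoke $\Gl_2(\Z)$-equivariance of the Klein polygon construction and of integer lengths and angles, then apply Propositions \ref{prop:nice_alpha_beta_criterion} and \ref{prop:KP_and_CF}). The one step worth writing out fully is the hypothesis $\alpha'>\beta'$ of Proposition \ref{prop:nice_alpha_beta_criterion}: besides $\alpha'>0$ from the fact that $\cL_{\alpha'}$ strictly separates $(1,0)$ from $(0,1)$, you should state explicitly that $A\cK_1\cup A\cK_2$ lies in one open half-plane bounded by $\cL_{\beta'}$, so $(1,0)$ and $(0,1)$ are on the same side of $y=\beta'x$, forcing $\beta'<0$ and hence $\alpha'>\beta'$ --- which is exactly the ``dual'' observation you gesture at.
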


The proof is left to the reader.

\subsection{Quadratic irrationalities}

\paragraph{Lagrange's theorem.}

It follows from Proposition \ref{prop:KP_and_CF} and Corollary \ref{cor:naming_vertices} that if the continued fraction of $\alpha$ is (eventually) periodic, then $(1,\alpha)$ is an eigenvector of an operator from $\Sl_2(\Z)$. Indeed, we may assume that $\alpha>1$ 
and complement it with $\beta$ 
satisfying \eqref{eq:nice_alpha_beta}. Let $(\vec v_k)$ be as in Corollary \ref{cor:naming_vertices}. Then, if $t$ is the period length and $s$ is the length of the preperiod, the operator $A$ determined by
\[ A\vec v_s=\vec v_{s+2t},\quad A\vec v_{s+1}=\vec v_{s+1+2t} \]
maps the whole sequence $(\vec v_k)_{k\geq s}$ onto its proper subset and preserves orientation. Hence $A\in\Sl_2(\Z)$ and $A\cL_\alpha=\cL_\alpha$. Thus $\alpha$ appears to be a quadratic irrationality, which is the simplest half of Lagrange's theorem on continued fractions.

In order to prove the hardest half of Lagrange's theorem, let us consider an operator $A\in\Sl_2(\Z)$ with positive irrational eigenvalues. It has eigenvectors $(1,\alpha)$, $(1,\beta)$, $\alpha\neq\beta$. It is easy to see that $\alpha$ and $\beta$ are conjugate quadratic irrationalities. Denote by $\cC$ the angle between $(1,\alpha)$ and $(1,\beta)$ and by $\cK$ the corresponding Klein polygon,
\[ \cK=\conv(\cC\cap\Z^2\backslash\{\vec 0\}). \]
Then $A(\cC)=\cC$, $A(\cK)=\cK$, $A(\partial\cK)=\partial\cK$, where $\partial\cK$ denotes the boundary of $\cK$. This means that the sequence $(a_k)$ of integer lengths and angles written along $\partial\cK$ is mapped onto itself under the action of $A$. Hence it is periodic. Applying Proposition \ref{prop:KP_and_CF_arbitrary_case} we see that $\alpha$ and $\beta$ have eventually periodic continued fraction expansions.

It remains to show that for any quadratic irrationality $\alpha$ there is an operator $A\in\Sl_2(\Z)$ such that $(1,\alpha)$ is its eigenvector. One of the standard ways to do this is to show that if $\alpha$ is a root of $ax^2+2bx+c$, $a,b,c\in\Z$, $ac<0$, then the quadratic form
\[ f(x,y)=cx^2+2bxy+ay^2 \]
admits a nontrivial $\Sl_2(\Z)$-automorphism with nonnegative entries. Such an automorphism would generate a hyperbolic shift with the axes generated by $(1,\alpha)$, $(1,\beta)$, where $\beta$ is the conjugate of $\alpha$.

\begin{remark*}
  The condition $ac<0$ can be easily satisfied due to statements \textup{(b)} and \textup{(c)} of Proposition \ref{prop:KP_and_CF_arbitrary_case}, for if $\alpha$ and $\beta$ are conjugates, so will be $\alpha'$ and $\beta'$.
\end{remark*}

An automorphism of $f(x,y)$ is found by iterating the substitutions
\[ (x,y)\to(x,x+y)\quad\text{ or }\quad(x,y)\to(x+y,y), \]
of which we choose according to the sign of $f(1,1)$. Initially we have $f(0,1)f(1,0)=ac<0$. So, if for the current $f(x,y)$ we have $f(1,1)f(0,1)<0$, we apply the first substitution, if $f(1,1)f(1,0)<0$, we apply the second one. Those products are nonzero, since $f(x,y)$ is never zero at nonzero integer points. The choice of the substitution is determined uniquely and it preserves the condition
\[ f(0,1)f(1,0)<0. \]
Moreover, this process does not change the discriminant $b^2-ac$, which is positive. This bounds possible values of coefficients. Therefore, since this process is invertible, the initial triple $(a,b,c)$ appears again inevitably. Thus we find a nonidentity $A\in\Sl_2(\Z)$ with nonnegative entries such that
\[ f(x,y)=
   (x\,\ y)
   \begin{pmatrix}
     c & b \\
     b & a
   \end{pmatrix}
   \begin{pmatrix}
     x \\
     y
   \end{pmatrix}=
   (x\,\ y)\tr A
   \begin{pmatrix}
     c & b \\
     b & a
   \end{pmatrix}
   A
   \begin{pmatrix}
     x \\
     y
   \end{pmatrix}. \]
Hence $(1,\alpha)$ and $(1,\beta)$ are eigenvectors of $A$, which completes the proof of Lagrange's theorem.

\paragraph{Galois' theorem.}

Let $\alpha$ and $\beta$ be conjugate quadratic irrationalities. Then, as we have just shown, $(1,\alpha)$, $(1,\beta)$ are eigenvectors of an operator $A\in\Sl_2(\Z)$. Taking into account Proposition \ref{prop:KP_and_CF_arbitrary_case} we see that a period of $\alpha$ is a reversed period of $\beta$. Moreover, if $\alpha$ and $\beta$ satisfy \eqref{eq:nice_alpha_beta}, then by Proposition \ref{prop:KP_and_CF} neither $\alpha$, nor $-1/\beta$ have any preperiod. In addition to that, if $\alpha$ is purely periodic, then statement \textup{(b)} of Proposition \ref{prop:nice_alpha_beta_criterion} holds, so that $\alpha$ and $\beta$ satisfy \eqref{eq:nice_alpha_beta}, whence, again by Proposition \ref{prop:KP_and_CF}, $-1/\beta$ has no preperiod.

This gives us Galois' theorem quoted in the Introduction.

\paragraph{Geometry of quadratic irrationalities.}

While proving Lagrange's and Galois' theorems we have particularly shown the following.

\begin{proposition}
  Let $\alpha$ and $\beta$ be quadratic irrationalities. Then the following statements are equivalent:

  \textup{(a)} $\alpha$ and $\beta$ are conjugates;

  \textup{(b)} $(1,\alpha)$ and $(1,\beta)$ are eigenvectors of an operator from $\Sl_2(\Z)$;

  \textup{(c)} the sequence $(a_k)$ of integer lengths and angles written along the boundary of a Klein polygon corresponding to $\cL_\alpha$ and $\cL_\beta$ is periodic;

  \textup{(d)} there is an $A\in\Gl_2(\Z)$ such that for $\alpha'$ and $\beta'$ determined by $A\cL_\alpha=\cL_{\alpha'}$, $A\cL_\beta=\cL_{\beta'}$ we have
  \[ \alpha'=[\overline{a_0;a_1,\ldots,a_t}],\quad-1/\beta'=[\overline{a_t;a_{t-1},\ldots,a_0}]. \]
\end{proposition}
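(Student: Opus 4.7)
The plan is to close the cycle $(a)\Rightarrow(b)\Rightarrow(c)\Rightarrow(d)\Rightarrow(a)$, recycling material already built up in the geometric proofs of Lagrange's and Galois' theorems.

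For $(a)\Rightarrow(b)$, I would quote the construction from the hard half of Lagrange's theorem. After an $\Gl_2(\Z)$-change of coordinates supplied by Proposition \ref{prop:KP_and_CF_arbitrary_case} (which preserves conjugacy), one may assume $\alpha$ is a root of $ax^2+2bx+c$ with $ac<0$. Iterating the two substitutions on the quadratic form $f(x,y)=cx^2+2bxy+ay^2$ produces a nontrivial $A\in\Sl_2(\Z)$ which, once pulled back, has $(1,\alpha)$ as an eigenvector. Since $A$ has integer entries, its second eigenvector has slope equal to the algebraic conjugate of $\alpha$, which by hypothesis is $\beta$; so both $(1,\alpha)$ and $(1,\beta)$ are eigenvectors.

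For $(b)\Rightarrow(c)$, the operator $A\in\Sl_2(\Z)$ preserves each of the lines $\cL_\alpha$, $\cL_\beta$, hence the angle $\cC$ they bound, hence the Klein polygon $\cK=\conv(\cC\cap\Z^2\setminus\{\vec 0\})$ and its boundary. Because $A$ also preserves integer lengths and integer angles, it shifts the doubly infinite sequence $(a_k)_{k\in\Z}$ written along $\partial\cK$ onto itself by a nonzero amount, forcing periodicity. For $(c)\Rightarrow(d)$, I would use Proposition \ref{prop:KP_and_CF_arbitrary_case} to transport the picture by some $A\in\Gl_2(\Z)$ to the canonical position $\alpha'>1$, $-1<\beta'<0$ without altering $(a_k)$. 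Proposition \ref{prop:KP_and_CF} then gives $\alpha'=[a_0;a_1,a_2,\ldots]$ and $-1/\beta'=[a_{-1};a_{-2},\ldots]$. If $t+1$ denotes the period length, the identity $a_{-j}=a_{t+1-j}$, valid for all $j\geq1$ by periodicity, converts these into the purely periodic expansions $[\overline{a_0;a_1,\ldots,a_t}]$ and $[\overline{a_t;a_{t-1},\ldots,a_0}]$.

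Finally, for $(d)\Rightarrow(a)$, I would invoke Galois' theorem established above: the expansion $\alpha'=[\overline{a_0;a_1,\ldots,a_t}]$ forces $-1/\bar{\alpha'}=[\overline{a_t;a_{t-1},\ldots,a_0}]$, which by hypothesis also equals $-1/\beta'$; hence $\beta'=\bar{\alpha'}$. Since the Möbius action induced by $A\in\Gl_2(\Z)$ has integer (in particular real) coefficients, it commutes with complex conjugation, so $\beta'=\bar{\alpha'}$ lifts back to $\beta=\bar\alpha$, i.e.\ statement $(a)$.

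No step is really an obstacle; each direction is a one-line consequence of results already in hand. The only place that calls for care is $(c)\Rightarrow(d)$, where one must track the reindexing $a_{-j}=a_{t+1-j}$ in order to see that the backward sequence along $\partial\cK$ assembles into the reversed period $(a_t,a_{t-1},\ldots,a_0)$ rather than into some cyclic shift of it.
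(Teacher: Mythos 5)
Your proposal is correct and follows essentially the same route as the paper, which gives no separate proof but simply notes that all four equivalences were established in the course of the geometric proofs of Lagrange's and Galois' theorems --- precisely the ingredients your cycle (a)$\Rightarrow$(b)$\Rightarrow$(c)$\Rightarrow$(d)$\Rightarrow$(a) reassembles. (One wording slip in (d)$\Rightarrow$(a): the involution that the M\"obius action of $A$ commutes with is the nontrivial automorphism of the real quadratic field $\Q(\alpha)$, not complex conjugation.)
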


\section{Proof of Theorem \ref{t:cyclic_palindrome_criterion}} \label{sec:proof}

Let $\alpha$ be a quadratic irrationality, and let $(a_k)_{k\in\Z}$ be the sequence of integer lengths and angles written along the boundaries of the Klein polygons corresponding to $\cL_\alpha$ and $\cL_{\bar\alpha}$. Given a specific edge or a vertex of one of the Klein polygons, we may assume that $a_0$ is attached to it.

If $\omega\sim\alpha$, then there is an $A\in\Gl_2(\Z)$ such that $A\cL_\alpha=\cL_\omega$ and $A\cL_{\bar\alpha}=\cL_{\bar\omega}$, so, we have the same sequence $(a_k)$ written along the boundaries of the Klein polygons corresponding to $\cL_\omega$ and $\cL_{\bar\omega}$.

The sequence $(a_k)$ is periodic and its period coincides with the period of the continued fraction of $\alpha$. This period is cyclic palindromic if and only if $(a_k)$ is symmetric. A center of this symmetry is either an element of $(a_k)$, or a space between neighbouring elements. In case it is an element of $(a_k)$, being a positive integer, it is either even, or odd. Thus, a center may be \emph{even}, \emph{odd}, or \emph{intermediate}.

We split the proof of Theorem \ref{t:cyclic_palindrome_criterion} into four lemmas, which are proved similarly.

\begin{lemma} \label{l:even_center}
  The sequence $(a_k)$ has an even center if and only if
  \begin{equation} \label{eq:even_center}
    \alpha\sim\omega:\ \omega+\bar\omega=0.
  \end{equation}
\end{lemma}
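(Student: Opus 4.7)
The plan is to realise the sequence-reversal symmetry geometrically by a $\Gl_2(\Z)$-involution, and then exploit the parity of $a_{k_0}$ to conjugate that involution inside $\Gl_2(\Z)$ to the canonical reflection $T=\mathrm{diag}(1,-1)$; the latter step is exactly what produces the condition $\omega+\bar\omega=0$.

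For the forward direction, suppose $(a_k)$ has an even centre $a_{k_0}=2m$ at position $k_0$, so $a_{k_0+s}=a_{k_0-s}$ for every $s$. Since $\{\vec v_{k_0-2},\vec v_{k_0-1}\}$ is a $\Z^2$-basis by Corollary~\ref{cor:naming_vertices}(a), I define $A\in\Gl_2(\Z)$ on this basis by $A\vec v_{k_0-2}=\vec v_{k_0}$ and $A\vec v_{k_0-1}=-\vec v_{k_0-1}$; its matrix $\left(\begin{smallmatrix}1&0\\2m&-1\end{smallmatrix}\right)$ has determinant $-1$ and squares to the identity, so $A$ is an involution with $\det A=-1$. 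A short induction on $|j|$ using the recurrence $\vec v_k=\vec v_{k-2}+a_k\vec v_{k-1}$ and the label symmetry yields $A\vec v_{k_0-2+j}=(-1)^j\vec v_{k_0-j}$ for every $j\in\Z$, so $A$ preserves the Klein polygon carrying the edge $[\vec v_{k_0-2},\vec v_{k_0}]$ and realises the sequence reversal. Because the eigenlines of any $\Gl_2(\Z)$-involution have rational slope, $A$ cannot stabilise the irrational-slope lines $\cL_\alpha,\cL_{\bar\alpha}$ individually and therefore swaps them. Now comes the crucial use of the parity: because $a_{k_0}=2m$ is even, the midpoint $\vec v_{k_0-2}+m\vec v_{k_0-1}$ of the edge $[\vec v_{k_0-2},\vec v_{k_0}]$ is a genuine lattice point, and together with $\vec v_{k_0-1}$ forms a $\Z^2$-basis --- precisely the $(+1)$- and $(-1)$-eigenbasis of $A$. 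The operator $C\in\Gl_2(\Z)$ sending this eigenbasis to the standard basis then satisfies $CAC^{-1}=T$, and writing $\omega$ for the element with $C\cL_\alpha=\cL_\omega$, the chain
\[
\cL_{\bar\omega}=C\cL_{\bar\alpha}=CA\cL_\alpha=TC\cL_\alpha=T\cL_\omega=\cL_{-\omega}
\]
forces $\bar\omega=-\omega$, i.e.\ $\omega+\bar\omega=0$.

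For the converse I use that $(a_k)$ depends only on the $\Gl_2(\Z)$-orbit of the pair $\{\cL_\alpha,\cL_{\bar\alpha}\}$, so I may replace $\alpha$ by $\omega$ and assume $\alpha+\bar\alpha=0$ from the start. Then $\cL_\alpha$ and $\cL_{\bar\alpha}$ are mirror images across the $x$-axis, $T\in\Gl_2(\Z)$ swaps them, and preserves the Klein polygon $\cK$ whose angle contains the positive $x$-axis; as $\det T=-1$, the action of $T$ reverses $\partial\cK$ and induces a symmetry of $(a_k)$ centred at the intersection of the $x$-axis with $\partial\cK$. If that intersection is interior to an edge $[\vec u,T\vec u]$ with $\vec u=(x_u,y_u)$, the edge has integer length $2y_u$; if instead it is at a vertex $p=(n,0)$ whose nearest integer points on the two adjacent edges are $\vec u$ and $T\vec u$, the sprout at $p$ lies on the $x$-axis between $(n,0)$ and $\vec u+T\vec u-p=(2x_u-n,0)$ and has integer length $2|x_u-n|$. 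In either case the label at the centre is even.

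The principal obstacle is the $\Z$-conjugacy $A\sim T$ in the forward half: it rests entirely on $a_{k_0}$ being even, because only then is the midpoint $\vec v_{k_0-2}+m\vec v_{k_0-1}$ actually in $\Z^2$ and available as a basis vector. For odd $a_{k_0}$ this midpoint lies in $\tfrac12\Z^2\setminus\Z^2$, the conjugation would only live in $\Gl_2(\Q)$, and the identification with $\omega+\bar\omega=0$ correctly fails --- as it should, since odd and intermediate centres correspond to the other parts of Theorem~\ref{t:cyclic_palindrome_criterion}.
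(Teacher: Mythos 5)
Your proof is correct, and its governing idea is the same as the paper's: an even centre corresponds to a determinant $-1$ involution of $\Z^2$ which swaps $\cL_\alpha$ and $\cL_{\bar\alpha}$ and whose fixed lattice direction passes through the midpoint of the central edge (or sprout). Where you genuinely diverge is in the implementation of ``even centre $\Rightarrow\omega+\bar\omega=0$''. The paper rebuilds the Klein polygon in normalised position by feeding $\vec v_{-2}=(1,-a_0/2)$, $\vec v_0=(1,a_0/2)$ into Proposition~\ref{prop:korkina} and then uses the uniqueness part of that proposition to conclude $A\cK=\cK$ for $A=\operatorname{diag}(1,-1)$ directly. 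You instead construct the reversing involution in situ: you define $A$ on the basis $\{\vec v_{k_0-2},\vec v_{k_0-1}\}$, verify $A\vec v_{k_0-2+j}=(-1)^j\vec v_{k_0-j}$ by induction from the recurrence $\vec v_k=\vec v_{k-2}+a_k\vec v_{k-1}$ and the palindrome condition, and only then conjugate $A$ to $\operatorname{diag}(1,-1)$ inside $\Gl_2(\Z)$ using the lattice midpoint $\vec v_{k_0-2}+m\vec v_{k_0-1}$. This makes that half of the argument independent of Proposition~\ref{prop:korkina} (only Corollary~\ref{cor:naming_vertices} is used) and isolates exactly where the parity of $a_{k_0}$ enters --- the midpoint is integral precisely when $a_{k_0}$ is even; the cost is the extra induction and the explicit conjugation, both of which the paper's normalisation renders unnecessary. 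Your converse direction matches the paper's up to bookkeeping: your dichotomy (the $x$-axis crosses $\partial\cK$ in the interior of an edge versus at a vertex) is the same case split as the paper's $|\omega|>1$ versus $|\omega|<1$. The only points worth tightening are small and at the same level of detail the paper itself elides: that $A\cK=\cK$ forces $A$ to permute the pair $\{\cL_\alpha,\cL_{\bar\alpha}\}$ (via the recession cone of $\cK$) before you can argue it must swap them, and, in the vertex case of the converse, that the label at the vertex equals the integer length of the sprout, which the paper records just after Definition~\ref{def:vertex_sprout}.
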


\begin{proof}
  Suppose $\omega$ satisfies \eqref{eq:even_center}. Then $\cL_\omega$ and $\cL_{\bar\omega}$ are symmetric with respect to the coordinate axes. Denote by $\cK$ the Klein polygon containing the point $(1,1)$ and set
  \[ A=
     \begin{pmatrix}
       1 & \phantom{-}0 \\
       0 & -1
     \end{pmatrix},\quad
     B=
     \begin{pmatrix}
       -1 & 0 \\
       \phantom{-}0 & 1
     \end{pmatrix}. \]
  Then, if $|\omega|>1$, we have $A\cK=\cK$, and we can attach $a_0$ to the vertical edge of $\cK$ (see Fig. \ref{fig:even_center}). If $|\omega|<1$, we have $B\cK=\cK$, and we can attach $a_0$ to the horizontal edge of $\cK$ (see Fig. \ref{fig:even_center}). Then $a_0$ is even and is a center of $(a_k)$.

  Conversely, given $(a_k)$ with an even center at $a_0$, let us set
  \[ \vec v_{-2}=(1,-a_0/2),\quad\vec v_0=(1,a_0/2) \]
  and apply Proposition \ref{prop:korkina}. We get a Klein polygon $\cK$ corresponding to some $\cL_\omega$ and $\cL_{\bar\omega}$. Clearly, $A\vec v_{-2}=\vec v_0$ and $A\vec v_0=\vec v_{-2}$. Applying Proposition \ref{prop:korkina} to $A\vec v_{-2}$ and $A\vec v_0$ we get
  \[ A\cK=\cK, \]
  whence $A\cL_\omega=\cL_{\bar\omega}$ and $A\cL_{\bar\omega}=\cL_\omega$. Thus, $\bar\omega=-\omega$.
\end{proof}

\begin{remark}
  It is clear from the proof of Lemma \ref{l:even_center} that the upper half of the segment $[\vec v_{-2},\vec v_0]$ provides $\omega$ with a preperiod consisting of $a_0/2$, which results in \eqref{eq:sqrt_expansion}.
\end{remark}

\begin{figure}[h]
  \centering
  \begin{tikzpicture}[scale=0.9]
    \draw[very thin,color=gray,scale=1] (-0.9,-3.8) grid (2.9,3.8);

    \draw[color=black] plot[domain=-1:3.1] (\x, {7*\x/5}) node[right]{$\cL_\omega$};
    \draw[color=black] plot[domain=-1:3.1] (\x, {-7*\x/5}) node[right]{$\cL_{-\omega}$};

    \fill[blue!10!,path fading=east]
        (1+1.9,1+1.9*3/2) -- (1,1) -- (1,-1) -- (1+1.9,-1-1.9*3/2) -- cycle;

    \draw[color=blue] (1+1.9,1+1.9*3/2) -- (1,1) -- (1,-1) -- (1+1.9,-1-1.9*3/2);

    \node[fill=blue,circle,inner sep=1.2pt] at (1,1) {};
    \node[fill=blue,circle,inner sep=1.2pt] at (1,-1) {};
    \node[fill=blue,circle,inner sep=0.8pt] at (1,0) {};

    \draw[blue] ([shift=(-90:0.2)]1,1) arc (-90:atan(3/2):0.2);
    \draw[blue] ([shift=({-atan(3/2)}:0.2)]1,-1) arc (-atan(3/2):90:0.2);

    \node[right] at (0.95,0) {$a_0$};
    \node[right] at (1+0.1,0.9) {$a_1$};
    \node[right] at (1+0.1,-0.9) {$a_{-1}$};
    \node[right] at (1.9,2.35) {$a_2$};
    \node[right] at (1.9,-2.35) {$a_{-2}$};

    \draw (2,0.5) node[right]{$\cK=\cK'$};
    \draw (1,-4.2) node[below]{$|\omega|>1$};
  \end{tikzpicture}
  \begin{tikzpicture}[scale=0.9]
    \draw[very thin,color=gray,scale=1] (-3.8,-0.9) grid (3.8,2.9);

    \draw[color=black] plot[domain=-7/5:3.1*7/5] (\x, {5*\x/7}) node[above right]{$\cL_\omega$};
    \draw[color=black] plot[domain=-3.1*7/5:7/5] (\x, {-5*\x/7}) node[below right]{$\cL_{-\omega}$};

    \fill[blue!10!,path fading=north]
        (1+1.9*3/2,1+1.9) -- (1,1) -- (-1,1) -- (-1-1.9*3/2,1+1.9) -- cycle;

    \draw[color=blue] (1+1.9*3/2,1+1.9) -- (1,1) -- (-1,1) -- (-1-1.9*3/2,1+1.9);

    \node[fill=blue,circle,inner sep=1.2pt] at (1,1) {};
    \node[fill=blue,circle,inner sep=1.2pt] at (-1,1) {};
    \node[fill=blue,circle,inner sep=0.8pt] at (0,1) {};

    \draw[blue] ([shift=({atan(2/3)}:0.2)]1,1) arc (atan(2/3):180:0.2);
    \draw[blue] ([shift=(0:0.2)]-1,1) arc (0:180-atan(2/3):0.2);

    \node[above] at (0,0.95) {$a_0$};
    \node[above] at (0.9,1+0.15) {$a_1$};
    \node[above] at (-0.9,1+0.1) {$a_{-1}$};
    \node[above] at (2.5,2.1) {$a_2$};
    \node[above] at (-2.5,2.1) {$a_{-2}$};

    \draw (0.1,2.2) node[above]{$\cK=\cK''$};
    \draw (0,-1.2) node[below]{$|\omega|<1$};

    \draw (-4.5,6.2) node[right]
        {$\begin{pmatrix}
            1 & \phantom{-}0 \\
            0 & -1
          \end{pmatrix}
          \cK'=\cK'$};
    \draw (-0.2,4.7) node[right]
        {$\begin{pmatrix}
            -1 & 0 \\
            \phantom{-}0 & 1
          \end{pmatrix}
          \cK''=\cK''$};
  \end{tikzpicture}
  \caption{Symmetries for the case $\omega+\bar\omega=0$} \label{fig:even_center}
\end{figure}

\begin{lemma} \label{l:odd_center_1/2}
  The sequence $(a_k)$ has an odd center if and only if
  \begin{equation} \label{eq:odd_center_1/2}
    \alpha\sim\omega:\ \omega+\bar\omega=1.
  \end{equation}
\end{lemma}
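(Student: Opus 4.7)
The plan is to mirror the proof of Lemma~\ref{l:even_center}, replacing the coordinate-axis reflections with the involution
\[
  A = \begin{pmatrix} 1 & 0 \\ 1 & -1 \end{pmatrix} \in \Gl_2(\Z),
\]
which satisfies $A^2 = I$ and $\det A = -1$, sends $\tr{(1,\omega)}$ to $\tr{(1, 1-\omega)}$, and has fixed line $\ell\colon 2y = x$ (through the primitive point $(2,1)$) and anti-fixed line the $y$-axis.

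For the implication $(\Leftarrow)$, I would first observe that $A\cL_\omega = \cL_{\bar\omega}$ when $\omega + \bar\omega = 1$; after the harmless swap $\omega \leftrightarrow 1-\omega$ one may assume $\omega > 1/2$, and two subcases arise. If $\omega > 1$ (so $\bar\omega < 0$), the angle $\cC$ between $\cL_\omega$ and $\cL_{\bar\omega}$ containing $(1,0)$ is preserved by $A$ (its bounding rays being swapped), and so is its Klein polygon $\cK$. Using $\lceil\bar\omega\rceil = 1-\lfloor\omega\rfloor$, the extremal lattice points of $\cC$ on the line $x=1$ are $(1, 1-\lfloor\omega\rfloor)$ and $(1, \lfloor\omega\rfloor)$; they are vertices of $\cK$ joined by a single vertical edge of odd integer length $2\lfloor\omega\rfloor-1$, whose endpoints are swapped by $A$ (with midpoint $(1,1/2)$ on $\ell$), and this is the central edge of $(a_k)$. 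If $1/2 < \omega < 1$, the angle is thin and $(2,1) \in \ell \cap \cC$ is a vertex of $\cK$ (a brief case check using $\omega, \bar\omega \in (0,1)$ rules out convex decompositions). Setting $\vec v_0 = (2,1)$, the relations $A\vec v_0 = \vec v_0$ and $A\vec v_{-1} = -\vec v_1$ (arising from $A$ swapping the two edges of $\cK$ at $\vec v_0$), combined with $\vec v_1 = \vec v_{-1} + a_1\vec v_0$, force $\vec v_{-1} = (-a_1, q)$ with $|a_1 + 2q| = |\det(\vec v_{-1}, \vec v_0)| = 1$, so the central angle $a_1$ is odd.

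For the converse $(\Rightarrow)$, given $(a_k)$ with odd center at $a_0$, I would set
\[
  \vec v_{-2} = \bigl(1, -(a_0-1)/2\bigr), \qquad \vec v_0 = \bigl(1, (a_0+1)/2\bigr),
\]
which are integer points because $a_0$ is odd; $\vec v_{-1} = (0,1)$ follows. The hypothesis of Proposition~\ref{prop:korkina} is satisfied (the distance from $\vec 0$ to the line $x=1$ equals $1$ and no integer point lies closer), so the proposition supplies a Klein polygon $\cK$ corresponding to some $\cL_\omega, \cL_{\bar\omega}$. A direct computation gives $A\vec v_{-2} = \vec v_0$ and $A\vec v_0 = \vec v_{-2}$, so the midpoint $(1, 1/2)$ of the edge $[\vec v_{-2}, \vec v_0]$ lies on $\ell$ inside the angle $\cC$ of $\cK$; consequently $A\cC = \cC$ and $A\cK = \cK$. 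The alternative $A\cL_\omega = \cL_\omega$ would give $\omega = 1-\omega$, impossible, so $A\cL_\omega = \cL_{\bar\omega}$ and $\omega + \bar\omega = 1$. The equivalence $\alpha \sim \omega$ then follows from Corollary~\ref{cor:korkina}.

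The hardest part will be the subcase $1/2 < \omega < 1$ of the forward direction: the $A$-fixed point on the sail is now a vertex rather than the midpoint of an edge, so the parity of the central element is not visible as a length count but must instead be extracted from the determinant relation $|\det(\vec v_{-1}, \vec v_0)| = 1$. The auxiliary claim that $(2,1)$ is actually a vertex of $\cK$ in this thin-angle case also deserves a short verification.
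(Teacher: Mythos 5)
Your proposal is correct, and its overall architecture coincides with the paper's: an involution in $\Gl_2(\Z)$ interchanging $\cL_\omega$ and $\cL_{\bar\omega}$ locates the center of symmetry in the forward direction, and the converse uses exactly the same vertices $\vec v_{-2}=(1,(1-a_0)/2)$, $\vec v_0=(1,(1+a_0)/2)$ together with Proposition~\ref{prop:korkina} applied twice (your extra remark that $A\cL_\omega=\cL_\omega$ would force $\omega=1-\omega$ is a small but welcome clarification the paper leaves implicit). The one place where you genuinely diverge is the subcase $1/2<\omega<1$. The paper introduces a \emph{second} reflection $B=\begin{pmatrix}-1&0\\-1&1\end{pmatrix}$, whose fixed point $(0,1/2)$ is the midpoint of the empty segment $[(-1,0),(1,1)]$ lying on an edge of the Klein polygon containing $(1,1)$; oddness of the central element is then read off exactly as in the $\omega>1$ case, as the integer length of an edge whose symmetry center falls in the middle of an empty subsegment. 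You instead keep the single matrix $A$, observe that its fixed line $2y=x$ meets the thin angle in the vertex $(2,1)$ of the other Klein polygon, and extract oddness of the integer \emph{angle} there from $\lvert\det(\vec v_{-1},\vec v_0)\rvert=1$ with $\vec v_{-1}=(-a_1,q)$. The two treatments are dual under the edge--sprout correspondence of Proposition~\ref{prop:edge_vs_sprout} (the paper's odd edge is precisely the edge corresponding to your odd vertex sprout), so both are sound; the paper's version buys uniformity between the two subcases at the cost of a second matrix, while yours buys a single symmetry at the cost of the auxiliary verification that $(2,1)$ is a vertex and the determinant computation, both of which you correctly flag and which do check out.
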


\begin{proof}
  Suppose $\omega$ satisfies \eqref{eq:odd_center_1/2}. Then $\cL_\omega$ and $\cL_{\bar\omega}$ are interchanged by
  \[ A=
     \begin{pmatrix}
       1 & \phantom{-}0 \\
       1 & -1
     \end{pmatrix}\quad\text{ and }\quad
     B=
     \begin{pmatrix}
       -1 & 0 \\
       -1 & 1
     \end{pmatrix}. \]
  Denote by $\cK$ the Klein polygon containing the point $(1,1)$.

  We may assume that $\omega>\bar\omega$. Particularly, $\omega>1/2$.

  If $\omega>1$, there is an edge of $\cK$ which contains $(1,0)$ and $(1,1)$. Let us attach $a_0$ to this edge (see Fig. \ref{fig:odd_center_1/2}). Then $a_0$ is odd and is a center of $(a_k)$, since
  \[ A\cK=\cK\quad\text{ and }\quad A
     \begin{pmatrix}
       1 \\
       1/2
     \end{pmatrix}=
     \begin{pmatrix}
       1 \\
       1/2
     \end{pmatrix}. \]

  If $\omega<1$, there is an edge of $\cK$ which contains $(-1,0)$ and $(1,1)$, and we attach $a_0$ to this edge (see Fig. \ref{fig:odd_center_1/2}). Then, again, $a_0$ is odd and is a center of $(a_k)$, since
  \[ B\cK=\cK\quad\text{ and }\quad B
     \begin{pmatrix}
       0 \\
       1/2
     \end{pmatrix}=
     \begin{pmatrix}
       0 \\
       1/2
     \end{pmatrix}. \]

  Conversely, given $(a_k)$ with an odd center at $a_0$, let us set
  \[ \vec v_{-2}=(1,(1-a_0)/2),\quad\vec v_0=(1,(1+a_0)/2) \]
  and apply Proposition \ref{prop:korkina}. We get a Klein polygon $\cK$ corresponding to some $\cL_\omega$ and $\cL_{\bar\omega}$. Clearly, $A\vec v_{-2}=\vec v_0$ and $A\vec v_0=\vec v_{-2}$. Same as in the proof of Lemma \ref{l:even_center}, we apply again Proposition \ref{prop:korkina} to $A\vec v_{-2}$ and $A\vec v_0$ and get
  \[ A\cK=\cK, \]
  whence $A\cL_\omega=\cL_{\bar\omega}$ and $A\cL_{\bar\omega}=\cL_\omega$. Thus, $\bar\omega=1-\omega$.
\end{proof}

\begin{figure}[h]
  \centering
  \begin{tikzpicture}[scale=0.9]
    \draw[very thin,color=gray,scale=1] (-0.9,-3.8) grid (2.9,6.8);

    \draw[color=black] plot[domain=-1:3.1] (\x, {12*\x/5}) node[right]{$\cL_\omega$};
    \draw[color=black] plot[domain=-1:3.1] (\x, {-7*\x/5}) node[right]{$\cL_{1-\omega}$};

    \fill[blue!10!,path fading=east]
        (1+1.9,2+1.9*5/2) -- (1,2) -- (1,-1) -- (1+1.9,-1-1.9*3/2) -- cycle;

    \draw[color=blue] (1+1.9,2+1.9*5/2) -- (1,2) -- (1,-1) -- (1+1.9,-1-1.9*3/2);

    \node[fill=blue,circle,inner sep=1.2pt] at (1,2) {};
    \node[fill=blue,circle,inner sep=1.2pt] at (1,-1) {};
    \node[fill=blue,circle,inner sep=0.8pt] at (1,1) {};
    \node[fill=blue,circle,inner sep=0.8pt] at (1,0) {};

    \draw[blue] ([shift=(-90:0.2)]1,2) arc (-90:atan(5/2):0.2);
    \draw[blue] ([shift=({-atan(3/2)}:0.2)]1,-1) arc (-atan(3/2):90:0.2);

    \node[right] at (0.95,0.5) {$a_0$};
    \node[right] at (1+0.1,1.9) {$a_1$};
    \node[right] at (1+0.1,-0.9) {$a_{-1}$};
    \node[right] at (1.9,4.35) {$a_2$};
    \node[right] at (1.9,-2.35) {$a_{-2}$};

    \draw (2,1.5) node[right]{$\cK=\cK'$};
    \draw (1,-4.2) node[below]{$\omega>1$};
  \end{tikzpicture}
  \begin{tikzpicture}[scale=0.9]
    \draw[very thin,color=gray,scale=1] (-4.8,-1.7) grid (4.8,2.9);

    \draw[color=black] plot[domain=-3:3.1*8/5] (\x, {5*\x/8}) node[right]{$\cL_\omega$};
    \draw[color=black] plot[domain=-3.1*8/5:3.1*8/5] (\x, {3*\x/8}) node[right]{$\cL_{1-\omega}$};

    \fill[blue!10!,path fading=north]
        (3+0.9*5/3,2+0.9) -- (3,2) -- (1,1) -- (-1,0) -- (-3-0.9*5/3,2+0.9) -- cycle;
    \fill[blue!10!,path fading=west]
        (-3-0.6*5/2,-1-0.6) -- (-3,-1) -- (-1,0) -- (-3-0.9*5/3,2+0.9) -- cycle;

    \draw[color=blue] (3+0.9*5/3,2+0.9) -- (3,2) -- (-3,-1) -- (-3-0.6*5/2,-1-0.6);

    \node[fill=blue,circle,inner sep=1.2pt] at (3,2) {};
    \node[fill=blue,circle,inner sep=1.2pt] at (-3,-1) {};
    \node[fill=blue,circle,inner sep=0.8pt] at (1,1) {};
    \node[fill=blue,circle,inner sep=0.8pt] at (-1,0) {};

    \draw[blue] ([shift=({atan(3/5)}:0.2)]3,2) arc (atan(3/5):180+atan(1/2):0.2);
    \draw[blue] ([shift=({atan(1/2)}:0.2)]-3,-1) arc (atan(1/2):180+atan(2/5):0.2);

    \node[above] at (-2+1.85,-1+1.45) {$a_0$};
    \node[above] at (2.85,2+0.15) {$a_1$};
    \node[above] at (-4+0.85,-1+0.15) {$a_{-1}$};

    \draw (-1+0.1,2.2) node[above]{$\cK=\cK''$};
    \draw (0,-2.2) node[below]{$\omega<1$};

    \draw (-4.5,7.2) node[right]
        {$\begin{pmatrix}
            1 & \phantom{-}0 \\
            1 & -1
          \end{pmatrix}
          \cK'=\cK'$};
    \draw (-0.2,4.7) node[right]
        {$\begin{pmatrix}
            -1 & 0 \\
            -1 & 1
          \end{pmatrix}
          \cK''=\cK''$};
  \end{tikzpicture}
  \caption{Symmetries for the case $\omega+\bar\omega=1$} \label{fig:odd_center_1/2}
\end{figure}

\begin{lemma} \label{l:odd_center_norm_1}
  The sequence $(a_k)$ has an odd center if and only if
  \begin{equation} \label{eq:odd_center_norm_1}
    \alpha\sim\omega:\ \omega\bar\omega=1.
  \end{equation}
\end{lemma}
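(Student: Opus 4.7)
My plan is to mirror the proof of Lemma \ref{l:odd_center_1/2}, this time exploiting the involution
\[ A=\begin{pmatrix} 0 & 1 \\ 1 & 0 \end{pmatrix}, \]
whose fixed line is the diagonal $y=x$. Since $A$ sends $(1,\omega)$ to $(\omega,1)$, which spans $\cL_{1/\omega}$, the condition $\omega\bar\omega=1$ is exactly what makes $A$ interchange $\cL_\omega$ and $\cL_{\bar\omega}$. Because $-1/\omega\sim\omega$ by Serret's theorem and $(-1/\omega)(-1/\bar\omega)=1$, I may assume $\omega>1$; then $\bar\omega\in(0,1)$ and the point $(1,1)$ lies in the sector between $\cL_\omega$ and $\cL_{\bar\omega}$ containing the diagonal.

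For sufficiency I take $\cK$ to be the Klein polygon in this sector. Being the closest integer point to the origin on the diagonal within the sector, $(1,1)$ is a vertex of $\cK$; since $A\cK=\cK$ and $A$ fixes $(1,1)$, the two edges of $\cK$ at $(1,1)$ are swapped by $A$. I attach $a_0$ to this vertex -- that is, I take $a_0$ to be the integer angle at $(1,1)$, equivalently the integer length of its sprout. The symmetry immediately yields $a_{-k}=a_k$ for all $k$.

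The main obstacle is showing $a_0$ is odd. Writing $(p,q)$ for the primitive direction of one edge of $\cK$ at $(1,1)$, by $A$-symmetry the other has primitive direction $(q,p)$, so Proposition \ref{prop:vertex_sprout} identifies the sprout as running from $(1,1)$ to $(1+p+q,1+p+q)$, whence $a_0=p+q$. If $p+q$ were even, then $p$ and $q$ would both be odd (as $\gcd(p,q)=1$), and the point $M=\bigl(\tfrac{1+p}{2},\tfrac{1+q}{2}\bigr)\in\Z^2$ would lie on the ray from the origin through $(1+p,1+q)\in\cK$, hence $M\in\cK$. But the linear functional $qx-py$ equals $q-p$ on the supporting line of $\cK$ through $(1,1)$ in direction $(p,q)$, $0$ at the origin, and $(q-p)/2$ at $M$; so $M$ lies strictly on the origin-side of the supporting line, contradicting $M\in\cK$.

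For necessity, given $(a_k)$ with an odd center $a_0$, I set
\[ \vec v_{-2}=\Bigl(\tfrac{a_0+1}{2},\tfrac{1-a_0}{2}\Bigr),\qquad \vec v_0=\Bigl(\tfrac{1-a_0}{2},\tfrac{a_0+1}{2}\Bigr); \]
these are integer points on the line $x+y=1$, and $[\vec v_{-2},\vec v_0]$ has integer length $a_0$. The line $x+y=1$ trivially satisfies the hypothesis of Proposition \ref{prop:korkina}, so the proposition yields a Klein polygon $\cK$ corresponding to some $\cL_\omega$ and $\cL_{\bar\omega}$. Since $A\vec v_{-2}=\vec v_0$ and $A\vec v_0=\vec v_{-2}$, the uniqueness clause in Proposition \ref{prop:korkina} forces $A\cK=\cK$; hence $A$ interchanges $\cL_\omega$ and $\cL_{\bar\omega}$, giving $\omega\bar\omega=1$ exactly as in Lemmas \ref{l:even_center} and \ref{l:odd_center_1/2}.
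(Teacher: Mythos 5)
Your proof is correct. The necessity half coincides with the paper's: the same points $\vec v_{-2}$, $\vec v_0$ on the line $x+y=1$, the same double application of Proposition \ref{prop:korkina}, the same conclusion $A\cK=\cK$. The sufficiency half takes a genuinely different normalization. The paper assumes $\omega<0$, so that the fixed point $(1/2,1/2)$ of $A$ is the midpoint of the empty segment $[(1,0),(0,1)]$ and the odd $a_0$ is realized as an \emph{edge length} (oddness is then immediate: an $A$-invariant edge symmetric about the non-integer point $(1/2,1/2)$ consists of an odd number of empty segments). You instead assume $\omega>1$, realize $a_0$ as the \emph{integer angle} at the fixed vertex $(1,1)$, and prove oddness via the sprout. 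Both work; yours has the merit of making the fixed lattice point explicit, at the cost of the extra parity argument. Two remarks. First, that argument can be shortened drastically: since $[(1,1),(1+p,1+q)]$ is an empty segment on an edge of $\cK$, the points $(1,1)$ and $(1+p,1+q)$ form a basis of $\Z^2$ (see the proof of Proposition \ref{prop:vertex_sprout}), so $q-p=\det\bigl(\begin{smallmatrix}1&1+p\\1&1+q\end{smallmatrix}\bigr)=\pm1$ and $p+q$ is odd at once; your detour through the point $M$ is correct but unnecessary. Second, the reduction to $\omega>1$ needs one more word than ``$-1/\omega\sim\omega$'': if $0<\omega<1$ you want $1/\omega$ rather than $-1/\omega$; since all of $\pm\omega^{\pm1}$ are equivalent to $\omega$ and satisfy the norm condition, one of them exceeds $1$, so the normalization is indeed harmless.
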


\begin{proof}
  Suppose $\omega$ satisfies \eqref{eq:odd_center_norm_1}. Then $\cL_\omega$ and $\cL_{\bar\omega}$ are interchanged by
  \[ A=
     \begin{pmatrix}
       0 & 1 \\
       1 & 0
     \end{pmatrix}. \]
  We may assume that $\omega<0$. Then the points $(1,0)$ and $(0,1)$ lie on an edge of a Klein polygon $\cK$ (see Fig. \ref{fig:odd_center_norm_1}). Attaching $a_0$ to this edge we see that $a_0$ is odd and is a center of $(a_k)$, since
  \[ A\cK=\cK\quad\text{ and }\quad A
     \begin{pmatrix}
       1/2 \\
       1/2
     \end{pmatrix}=
     \begin{pmatrix}
       1/2 \\
       1/2
     \end{pmatrix}. \]

  Conversely, given $(a_k)$ with an odd center at $a_0$, let us set
  \[ \vec v_{-2}=\bigg(\frac{1+a_0}{2},\frac{1-a_0}{2}\bigg),\quad\vec v_0=\bigg(\frac{1-a_0}{2},\frac{1+a_0}{2}\bigg), \]
  notice that $A\vec v_{-2}=\vec v_0$, $A\vec v_0=\vec v_{-2}$, and apply Proposition \ref{prop:korkina} twice. As before, we thus obtain a Klein polygon $\cK$ corresponding to some $\cL_\omega$ and $\cL_{\bar\omega}$ such that $A\cK=\cK$, $A\cL_\omega=\cL_{\bar\omega}$, and $A\cL_{\bar\omega}=\cL_\omega$. Hence $\bar\omega=1/\omega$.
\end{proof}

\begin{figure}[h]
  \centering
  \begin{tikzpicture}[scale=0.9]
    \draw[very thin,color=gray,scale=1] (-2.8,-2.8) grid (4.8,4.8);

    \draw[color=black] plot[domain=-3:5.1] (\x, {-5*\x/8}) node[right]{$\cL_\omega$};
    \draw[color=black] plot[domain=-3.1:2] (\x, {-8*\x/5}) node[right]{$\cL_{1/\omega}$};

    \fill[blue!10!,path fading=north]
        (4.8,4.8) -- (1/2,1/2) -- (-1,2) -- (-1-2.8*2/3,2+2.8) -- cycle;
    \fill[blue!10!,path fading=east]
        (2+2.8,-1-2.8*2/3) -- (2,-1) -- (1/2,1/2) -- (4.8,4.8) -- cycle;

    \draw[color=blue] (2+2.8,-1-2.8*2/3) -- (2,-1) -- (-1,2) -- (-1-2.8*2/3,2+2.8);

    \node[fill=blue,circle,inner sep=1.2pt] at (2,-1) {};
    \node[fill=blue,circle,inner sep=1.2pt] at (-1,2) {};
    \node[fill=blue,circle,inner sep=0.8pt] at (1,0) {};
    \node[fill=blue,circle,inner sep=0.8pt] at (0,1) {};

    \draw[blue] ([shift=({-atan(2/3)}:0.2)]2,-1) arc (-atan(2/3):135:0.2);
    \draw[blue] ([shift=(-45:0.2)]-1,2) arc (-45:90+atan(2/3):0.2);

    \node[above right] at (0.4,0.4) {$a_0$};
    \node[above right] at (-1,2) {$a_1$};
    \node[above right] at (2,-1) {$a_{-1}$};
    \node[above right] at (3.4,-2.1) {$a_{-2}$};
    \node[above right] at (-2.05,3.4) {$a_2$};

    \draw (2.17,2.17) node[above right]{$\cK$};

    \draw (6,1) node[right]
        {$\begin{pmatrix}
            0 & 1 \\
            1 & 0
          \end{pmatrix}
          \cK=\cK$};
  \end{tikzpicture}
  \caption{Symmetry for the case $\omega\bar\omega=1$} \label{fig:odd_center_norm_1}
\end{figure}

\begin{lemma} \label{l:intermediate_center}
  The sequence $(a_k)$ has an intermediate center if and only if
  \begin{equation} \label{eq:intermediate_center}
    \alpha\sim\omega:\ \omega\bar\omega=-1.
  \end{equation}
\end{lemma}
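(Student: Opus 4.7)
The plan is to follow the template of Lemmas \ref{l:even_center}--\ref{l:odd_center_norm_1}, but with the operator $A$ now being a $90^{\circ}$ rotation that swaps two adjacent Klein polygons rather than an axial reflection that preserves one. Specifically, I would take
\[
  A = \begin{pmatrix} 0 & -1 \\ 1 & \phantom{-}0 \end{pmatrix} \in \Sl_2(\Z).
\]
The condition $\omega\bar\omega = -1$ is equivalent to the perpendicularity of $\cL_\omega$ and $\cL_{\bar\omega}$, and this in turn manifests as the fact that $A$ interchanges the two lines.

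For the sufficiency direction, after reducing (by a shift of the continued fraction) to the case $\omega > 1$ so that \eqref{eq:nice_alpha_beta} applies, the identity $\bar\omega = -1/\omega$ gives $A(1,\omega) = (-\omega,1) = -\omega(1,\bar\omega)$, so $A\cL_\omega = \cL_{\bar\omega}$. Thus $A$ maps the Klein polygon $\cK_1$ containing $\vec v_{-2}=(1,0)$ onto the adjacent polygon $\cK_2$ containing $\vec v_{-1}=(0,1)$, and satisfies $A\vec v_{-2}=\vec v_{-1}$, $A\vec v_{-1}=-\vec v_{-2}$. Since $A^4 = I$ and $A$ preserves the combinatorial data of Corollary \ref{cor:naming_vertices}, the induced permutation of the vertex-index set must be a finite-order affine involution; the two initial identities pin it down to $k\mapsto -3-k$, and the alternating signs then give
\[
  A\vec v_k = (-1)^k\vec v_{-3-k}\qquad (k\in\Z).
\]
Substituting this identity into the recurrence \eqref{eq:recurrence_relation} and rewriting the recurrence ``in reverse'' at index $-1-k$ cancels the $\vec v$-terms, leaving $a_k = a_{-1-k}$, which is exactly the condition that $(a_k)$ has an intermediate center between $a_{-1}$ and $a_0$.

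For the converse, given an intermediate center I would cyclically shift it to lie between $a_{-1}$ and $a_0$, so that $a_k = a_{-1-k}$ and in particular $a_{-1}=a_0$. Setting $\vec v_{-2} = (1,0)$, $\vec v_{-1} = (0,1)$, $\vec v_0 = (1,a_0)$ and applying Proposition \ref{prop:korkina} yields a Klein polygon $\cK$ corresponding to some $\cL_\omega$ and $\cL_{\bar\omega}$. The palindromic identity gives $\vec v_{-3} = \vec v_{-1} - a_0\vec v_{-2} = (-a_0,1)$, whence the same $A$ satisfies $A\vec v_{-2} = \vec v_{-1}$ and $A\vec v_0 = (-a_0,1) = \vec v_{-3}$; in other words, $A$ carries the edge $[\vec v_{-2},\vec v_0]$ of $\cK$ onto the edge $[\vec v_{-3},\vec v_{-1}]$ of the adjacent polygon $\cK_2$. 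A second application of Proposition \ref{prop:korkina} to this image pair, combined with the identity $a_k = a_{-1-k}$ to match up the subsequent vertices against those of $\cK_2$, produces $A\cK = \cK_2$. Since $A$ therefore swaps $\cL_\omega$ and $\cL_{\bar\omega}$, the vector $(1,\bar\omega)$ is proportional to $A(1,\omega) = (-\omega,1)$, which forces $\bar\omega = -1/\omega$.

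The main obstacle lies in the converse: unlike in Lemmas \ref{l:even_center}--\ref{l:odd_center_norm_1}, where $A$ preserves the single Klein polygon $\cK$ so that the two applications of Proposition \ref{prop:korkina} trivially coincide, here $A$ carries $\cK$ to an \emph{a priori} different polygon, and it is precisely the palindromic identity $a_k = a_{-1-k}$ that is needed to identify this image with the already-constructed neighbor $\cK_2$. Getting the two Korkina constructions to see the same sequence consistently is the step that will require the most care.
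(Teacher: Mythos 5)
Your proposal is correct and follows essentially the same route as the paper: the same rotation $A=\left(\begin{smallmatrix}0&-1\\1&\phantom{-}0\end{smallmatrix}\right)$, the same reduction to $\omega>1$ with \eqref{eq:nice_alpha_beta}, the identity $A\vec v_k=\vec v_{-3-k}$ (the paper states it for even $k$; your signed version for all $k$ is the same fact) yielding $a_k=a_{-1-k}$, and the same converse via Proposition \ref{prop:korkina}. The only cosmetic difference is that in the converse you invoke the uniqueness in Proposition \ref{prop:korkina} directly on the image segment $[\vec v_{-1},\vec v_{-3}]$, whereas the paper packages the same double application as Corollary \ref{cor:korkina} and then identifies the resulting operator $B$ with $A$ via orthogonality.
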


\begin{proof}
  Suppose $\omega$ satisfies \eqref{eq:intermediate_center}. Then $\cL_\omega$ and $\cL_{\bar\omega}$ are orthogonal and are interchanged by
  \[ A=
     \begin{pmatrix}
       0 & -1 \\
       1 & \phantom{-}0
     \end{pmatrix}. \]
  We may assume that $\omega>1$. Then $\omega$ and $\bar\omega$ satisfy \eqref{eq:nice_alpha_beta} and we can set
  \[ \vec v_{-2}=(1,0),\ \ \vec v_{-1}=(0,1), \]
  attaching thus $a_0$ to the vertical edge of the Klein polygon corresponding to the angle between $(1,\omega)$ and $(1,\bar\omega)$ (see Fig. \ref{fig:intermediate_center}). Then for each even $k$ we have $A\vec v_k=\vec v_{-3-k}$\,, whence it follows that for each integer $k$ we have
  \begin{equation} \label{eq:intermediate_symmetry}
    a_k=a_{-1-k}\,,
  \end{equation}
  i.e. the space between $a_0$ and $a_{-1}$ is a center of symmetry of $(a_k)$.

  Conversely, given $(a_k)$ with an intermediate center between $a_0$ and $a_{-1}$, let us set
  \[ \vec v_{-2}=(1,0),\quad\vec v_0=(1,a_0). \]
  Applying Proposition \ref{prop:korkina} we get a Klein polygon $\cK_1$ with vertices $\vec v_{2m}$, $m\in\Z$. The points $\vec v_{2m+1}$, $m\in\Z$, are vertices of an adjacent Klein polygon $\cK_2$. Since $(a_k)$ satisfies \eqref{eq:intermediate_symmetry}, 1-skeletons of $\cK_1$ and $\cK_2$ are isomorphic, so that by Corollary \ref{cor:korkina} there is a $B\in\Gl_2(\Z)$ such that
  \[ B\vec v_k=\vec v_{-3-k}\,. \]
  But the segment $[\vec v_{-1},\vec v_{-3}]$, being parallel to $\vec v_{-2}$, is orthogonal to $[\vec v_{-2},\vec v_0]$. Hence $B=A$ and, thus, $\omega\bar\omega=-1$.
\end{proof}

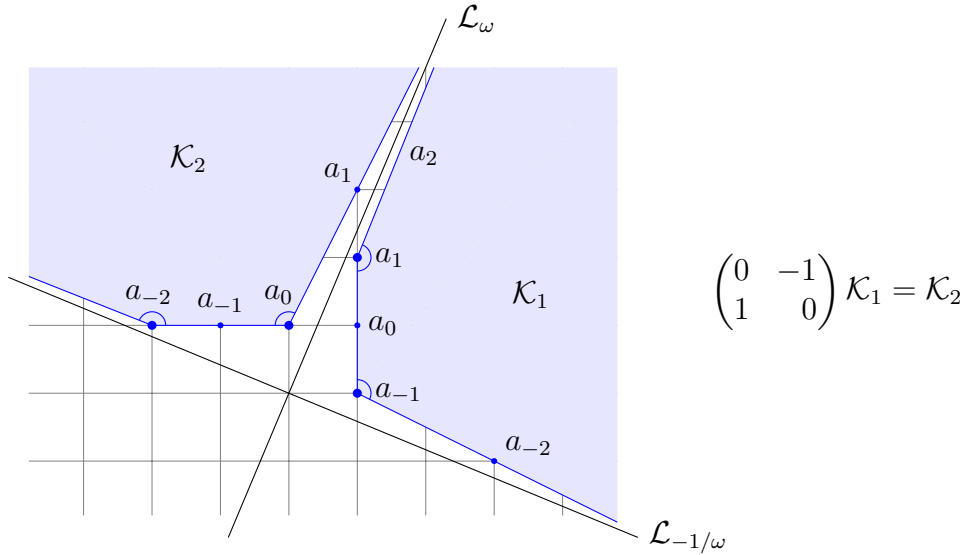
\begin{figure}[h]
  \centering
  \begin{tikzpicture}[scale=0.9]
    \draw[very thin,color=gray,scale=1] (-3.8,-1.8) grid (4.8,4.8);

    \draw[color=black] plot[domain=-5.1*25/144:2.3] (\x, {12*\x/5}) node[right]{$\cL_\omega$};
    \draw[color=black] plot[domain=-4.1:5.1] (\x, {-5*\x/12}) node[right]{$\cL_{-1/\omega}$};

    \fill[blue!10!,path fading=north]
        (1+3.8,2+2.8) -- (1,0) -- (1,2) -- (1+2.8*2/5,2+2.8) -- cycle;
    \fill[blue!10!,path fading=east]
        (1+3.8,-3.8/2) -- (1,0) -- (1+3.8,2+2.8) -- cycle;
    \fill[blue!10!,path fading=north]
        (3.8/2,1+3.8) -- (0,1) -- (-2-1.8,1+3.8) -- cycle;
    \fill[blue!10!,path fading=west]
        (-2-1.8,1+3.8) -- (0,1) -- (-2,1) -- (-2-1.8,1+1.8*2/5) -- cycle;

    \draw[color=blue] (1+3.8,-3.8/2) -- (1,0) -- (1,2) -- (1+2.8*2/5,2+2.8);
    \draw[color=blue] (3.8/2,1+3.8) -- (0,1) -- (-2,1) -- (-2-1.8,1+1.8*2/5);

    \node[fill=blue,circle,inner sep=1.2pt] at (1,0) {};
    \node[fill=blue,circle,inner sep=1.2pt] at (1,2) {};
    \node[fill=blue,circle,inner sep=1.2pt] at (0,1) {};
    \node[fill=blue,circle,inner sep=1.2pt] at (-2,1) {};
    \node[fill=blue,circle,inner sep=0.8pt] at (1,1) {};
    \node[fill=blue,circle,inner sep=0.8pt] at (-1,1) {};
    \node[fill=blue,circle,inner sep=0.8pt] at (3,-1) {};
    \node[fill=blue,circle,inner sep=0.8pt] at (1,3) {};

    \draw[blue] ([shift=({-atan(1/2)}:0.2)]1,0) arc (-atan(1/2):90:0.2);
    \draw[blue] ([shift=(-90:0.2)]1,2) arc (-90:atan(5/2):0.2);
    \draw[blue] ([shift=({atan(2)}:0.2)]0,1) arc (atan(2):180:0.2);
    \draw[blue] ([shift=(0:0.2)]-2,1) arc (0:90+atan(5/2):0.2);

    \node[right] at (1,1) {$a_0$};
    \node[right] at (1.1,2) {$a_1$};
    \node[right] at (1.1,0) {$a_{-1}$};
    \node[above right] at (3,-1.1) {$a_{-2}$};
    \node[right] at (1.6,3.5) {$a_2$};
    \node[above] at (-0.15,1.15) {$a_0$};
    \node[left] at (1.1,3.25) {$a_1$};
    \node[above] at (-1,1) {$a_{-1}$};
    \node[above] at (-2.05,1.12) {$a_{-2}$};

    \draw (3.1,1.1) node[above right]{$\cK_1$};
    \draw (-2+0.1,3.1) node[above right]{$\cK_2$};

    \draw (6,1.5) node[right]
        {$\begin{pmatrix}
            0 & -1 \\
            1 & \phantom{-}0
          \end{pmatrix}
          \cK_1=\cK_2$};
  \end{tikzpicture}
  \caption{Symmetry for the case $\omega\bar\omega=-1$} \label{fig:intermediate_center}
\end{figure}

Combining Lemmas \ref{l:even_center}--\ref{l:intermediate_center} we get Theorem \ref{t:cyclic_palindrome_criterion}.

\end{document}